\documentclass[11pt]{article}

\usepackage[utf8]{inputenc}
\usepackage[T1]{fontenc}
\usepackage{amsmath,amssymb,amsthm}
\usepackage{mathtools}
\usepackage{graphicx}
\usepackage{float}
\usepackage{booktabs}
\usepackage{geometry}
\usepackage{hyperref}
\usepackage{natbib}
\usepackage{fancyhdr}

\newtheorem{theorem}{Theorem}[section]
\newtheorem{lemma}[theorem]{Lemma}
\newtheorem{corollary}[theorem]{Corollary}
\newtheorem{proposition}[theorem]{Proposition}
\theoremstyle{definition}
\newtheorem{definition}[theorem]{Definition}
\newtheorem{assumption}{Assumption}
\theoremstyle{remark}
\newtheorem{remark}[theorem]{Remark}

\newcommand{\R}{\mathbb{R}}
\newcommand{\E}{\mathbb{E}}
\newcommand{\Var}{\operatorname{Var}}
\newcommand{\Cov}{\operatorname{Cov}}
\newcommand{\supp}{\operatorname{supp}}
\newcommand{\lnk}{\ln_\kappa}
\newcommand{\expk}{\exp_\kappa}

\title{Entropy-Based Goodness-of-Fit Tests for Generalized $\kappa$-Gaussian Distributions}

\author{
    Mehmet S\i dd\i k \c{C}ad\i rc\i$^{1}$ \and
    Konstantinos Zografos$^{2,*}$
}

\date{
    \small
    $^{1}$ Faculty of Science, Department of Statistics, Cumhuriyet University, Sivas, T\"urkiye.\\
    $^{2}$ University of Ioannina, Department of Mathematics, 451 10 Ioannina, Greece.
}

\begin{document}

\maketitle

\fancyhf{}
\fancyfoot[L]{%
    \footnotesize
    $^{*}$ Corresponding author:
    \href{mailto:kzograf@uoi.gr}{\texttt{kzograf@uoi.gr}}
}
\renewcommand{\headrulewidth}{0pt}
\renewcommand{\footrulewidth}{0pt}

\thispagestyle{fancy}

\begin{abstract}
We introduce this class of multivariate $\kappa$-Gaussian distributions by maximizing a type of Kaniadakis $(\kappa$-)entropy subject to certain first- and second-moment constraints. This differs slightly from the approach of R\'enyi and Tsallis, because the $\kappa$-entropy arises from a relativity-consistent deformation of the exponential and logarithmic functions and yields a family of distributions whose tails follow a power law, a family that actually reverts to the standard Gaussian distribution as $\kappa \to 0$.
We then create a $k$-nearest-neighbor (NN) estimator for $\kappa$-entropy and describe it using the difference between two NN functionals involving sums of powers. Next, we establish $L^2$-consistency and asymptotic normality, using the well-known subadditive Euclidean functional and moment expansion framework for NN entropy estimators. Furthermore, using a scaling identity for the $\kappa$-exponential, we derive a closed-form formula for the entropy of the $\kappa$-Gaussian distribution itself. From this, we determine the exact threshold value $\kappa < 2/(m+2)$ at which the distribution preserves finite variance; this is analogous to the “degrees of freedom $>2$” condition in the Student’s-$t$ distribution. With these components, we construct a non-parametric goodness-of-fit test and then verify it through simulations. Finally, we mention the Sharma–Mittal entropy, which combines the R\'enyi and Tsallis entropies as a natural direction for future work.

\medskip
\noindent \textbf{Keywords:} Kaniadakis entropy, maximum entropy principle, $\kappa$-Gaussian distribution, nearest neighbour entropy estimation, goodness-of-fit test, $L^2$-consistency, asymptotic normality.
\end{abstract}

\section{Introduction}

The entropy-based goodness-of-fit (GOF) test, which is based on the maximum entropy principle and the nearest-neighbor (NN) entropy estimate, offers a flexible approach to nonparametric tests for generalized multivariate distributions. This approach is based on the Kozachenko –Leonenko differential entropy estimator \citep{kozachenko1987}, which was later rigorously analyzed by \citet{biau2015} and \citet{evans2008} and equipped with exact moment expansions and efficient generalizations by \citet{leonenko2008} and \citet{berrett2019}. The consistency theory of such estimators is based on the theory of subadditive and superadditive Euclidean functionals in \citet{redmond1996} and its extension to general power-weighted NN functionals and central limit theorems by \citet{penrose2003} and \citet{penrose2001}.

Based on this framework, generalized multivariate Gaussian distributions based on Shannon entropy have been developed in \citep{cadirci2022entropy}, multivariate Student and Pearson Type II distributions based on R\'enyi entropy in \citep{cadirci2025renyi}, and the $q$-Gaussian distribution based on Tsallis entropy in \citep {cadirci2025tsallis}. GOF tests have been developed; the latter is connected to the broader, non-exhaustive statistical mechanics literature cited in \citet{tsallis1988} and \citet{havrda1967}. A parallel line of research extends the Shannon entropy approach to directed data using generalized von Mises–Fisher distributions \citep{leonenko2021vmf}.

Despite this progress, one important family of single-parameter entropies remains outside this framework: the \emph{Kaniadakis entropy} ($\kappa$-entropy), proposed by \citet{kaniadakis2001,kaniadakis2002} as a relativity-consistent generalization of the Boltzmann–Gibbs entropy introduced as a relativity-consistent deformation of Shannon entropy, with its thermostatic foundations based on deformed algebra developed in greater detail in \citet{kaniadakis2005} and comprehensively reviewed in \citet{kaniadakis2013}. Unlike the R\'enyi and Tsallis entropies, the monotonous functions of a \emph{single} power-sum functional $\int f^\alpha$, the $\kappa$-entropy is the \emph{difference} between two power-sum functionals evaluated at conjugate powers $1\pm\kappa$. The $\kappa$-Gaussian distribution, which is the distribution with maximum entropy, has power-law tails and has documented applications in plasma physics, cosmic ray spectra, and income/wealth distributions \citep{kaniadakis2013}; however, no $L^2$-consistent GOF test based on artificial neural networks is currently available for it. The relevant two-parameter composite entropies in particular, the Sharma–Mittal entropy, which includes the R\'enyi and Tsallis entropies as special cases \citep{sharma1975} have similarly not been incorporated into this NN estimation framework and are addressed as future work in Section~\ref{sec:conclusion}.

In this article, we aim to fill a gap in the literature on $\kappa$-entropy. Our contributions are as follows:
\begin{enumerate}
\item We derive the multivariate $\kappa$-Gaussian distribution as a maximizer of the Kaniadakis entropy under moment constraints; thereby providing an exact closed-form expression for both the normalization constant and the scale parameter, and deriving a closed-form expression for the entropy using the scaling identity of the $\kappa$-exponential function (Section~\ref{sec:distribution}).
\item We present the formal $k$-NN distance structure that forms the basis of our estimator (Section~\ref{sec:nn}), which is consistent with the framework of \citet{leonenko2008,cadirci2022entropy,cadirci2025renyi,cadirci2025tsallis}.
\item The nearest neighbor estimator of $\kappa$-entropy is built as a difference between two power sum, NN functionals, and we prove, kind of, its $L^2$ consistency and asymptotic normality too, with an explicit ratio (Sections~\ref{sec:estimator}--\ref{sec:clt}).
\item We construct a non-parametric GOF test for the multivariate $\kappa$-Gaussian family and examine it by simulation (Sections~\ref{sec:test}--\ref{sec:simulation}).
\item Finally, we conclude by discussing the Sharma–Mittal entropy as a unifying perspective for future research (Section~\ref{sec:conclusion}).
\end{enumerate}

\section{Preliminaries: Kaniadakis Entropy}
\label{sec:prelim}

For $\kappa \in (0,1)$, the $\kappa$-deformed logarithm and exponential functions are given by:
\begin{equation}
\lnk(x) = \frac{x^{\kappa} - x^{-\kappa}}{2\kappa}, \qquad
\expk(x) = \left(\sqrt{1+\kappa^2 x^2} + \kappa x\right)^{1/\kappa}, \qquad x>0.
\end{equation}
Both converge to the ordinary logarithm and exponential functions when $\kappa \to 0$ and satisfy the conditions $\lnk(1/x) = -\lnk(x)$ and $\expk(x)\expk(-x) = 1$ \citep{kaniadakis2002}. The Kaniadakis entropy for a density function $f$ on $\R^m$ is given by:
\begin{equation}
S_\kappa[f] = -\int_{\R^m} f(x)\,\lnk f(x)\, dx
= \int_{\R^m} f(x)\,\frac{f(x)^{-\kappa} - f(x)^{\kappa}}{2\kappa}\,dx.
\end{equation}
Let $I^{(\alpha)}[f] = \int_{\R^m} f(x)^\alpha\,dx$. Then, we obtain the following identity used throughout the text:
\begin{equation}
\label{eq:decomposition}
S_\kappa[f] = \frac{I^{(1-\kappa)}[f] - I^{(1+\kappa)}[f]}{2\kappa}.
\end{equation}
This is structurally distinct from the R\'enyi entropy $H_\alpha[f] = \frac{1}{1-\alpha}\log I^{(\alpha)}[f]$ and the Tsallis entropy $S_\alpha^{T}[f] = \frac{1-I^{(\alpha)}[f]}{\alpha - 1}$; both of which are functions of a single power-sum functional.

\begin{lemma}[Scaling Identity for the $\kappa$-Exponential Function]
\label{lem:scaling}
Let $\alpha > 0$ and $x \in \mathbb{R}$, then
\begin{equation}
\big[\exp k(x)\big]^{\alpha} = \exp_{\kappa/\alpha}(\alpha x).
\end{equation}
\end{lemma}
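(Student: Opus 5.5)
Since both sides are explicit closed-form expressions, I will prove the identity by direct computation from the definition of $\expk$ in Section~\ref{sec:prelim}. (The left-hand side $\exp k(x)$ is read as $\expk(x)$.) Put $\kappa' = \kappa/\alpha$ and evaluate the right-hand side:
\begin{equation*}
\exp_{\kappa'}(\alpha x) = \Bigl(\sqrt{1+\kappa'^2\alpha^2x^2} + \kappa'\alpha x\Bigr)^{1/\kappa'} .
\end{equation*}
The key observation is that $\kappa'\alpha = \kappa$. Hence the base $\sqrt{1+\kappa'^2\alpha^2 x^2}+\kappa'\alpha x$ equals $\sqrt{1+\kappa^2x^2}+\kappa x$ exactly. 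This is the same base that appears in $\expk(x)$, while the exponent has become $1/\kappa' = \alpha/\kappa$.

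The second step is to justify the power manipulation. Write $b(x)=\sqrt{1+\kappa^2x^2}+\kappa x$ and note that $b(x)>0$ for every real $x$. For $x\ge 0$ this is clear. For $x<0$ it follows from $\sqrt{1+\kappa^2x^2}>|\kappa x|$, or equivalently from $b(x)\,b(-x)=1$, which is the same fact behind $\expk(x)\expk(-x)=1$. Since the base is positive, real powers are well defined and $\bigl(b^{1/\kappa}\bigr)^{\alpha}=b^{\alpha/\kappa}$. Therefore
\begin{equation*}
[\expk(x)]^{\alpha} = b(x)^{\alpha/\kappa} = b(x)^{1/\kappa'} = \exp_{\kappa/\alpha}(\alpha x).
\end{equation*}

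The only point needing care is the parameter range. In Section~\ref{sec:prelim} the deformation is introduced for $\kappa\in(0,1)$, but $\kappa/\alpha$ may exceed $1$ when $\alpha<\kappa$. I will note that the defining formula for $\exp_{\kappa'}$ makes sense, and the computation above goes through verbatim, for every $\kappa'>0$. The identity should therefore be read with $\exp_{\kappa/\alpha}$ given by that same formula, whatever the value of $\kappa/\alpha$. There is also the restriction "$x>0$" in the definition. That restriction is needed for $\lnk$, but not for $\expk$, which is well defined on all of $\R$ by the positivity of $b$. So the statement holds for all $x\in\R$, as claimed.
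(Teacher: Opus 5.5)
Your proof is correct and takes the same approach as the paper. You set $\kappa'=\kappa/\alpha$, note that $\kappa'\alpha x=\kappa x$, so the base of $\exp_{\kappa'}(\alpha x)$ equals the base of $\expk(x)$, and then match the exponent $1/\kappa'=\alpha/\kappa$. Your added checks are useful refinements: positivity of $\sqrt{1+\kappa^2x^2}+\kappa x$, which makes $(b^{1/\kappa})^{\alpha}=b^{\alpha/\kappa}$ legitimate, and validity of the formula when $\kappa/\alpha\ge 1$. You are also right that the equality is exact; the paper's phrase ``up to a constant factor'' is not needed.
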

\begin{proof}
First, recall that $\exp k(x) = (\sqrt{1 + \kappa^2 x^2} + \kappa x)^{1/\kappa}$. Thus, we obtain $[\exp k(x)]^\alpha = (\sqrt{1 + \kappa^2 x^2} + \kappa x)^{\alpha/\kappa}$, which is indeed the case. Now let us define a new symbol $\kappa' = \kappa/\alpha$ and set $y = \alpha x$. In this case, remember that $\kappa' y = \kappa x$ and $\kappa'^2 y^2 = \kappa^2 x^2$; therefore,
when we evaluate the expression $\exp_{\kappa'}(y) = (\sqrt{1+\kappa'^2 y^2}+\kappa'y)^{1/\kappa'}$, the result is
$(\sqrt{1+\kappa^2 x^2}+\kappa x)^{\alpha/\kappa}$.
Moreover, this matches exactly, up to a constant factor, the expression we obtained for $[\exp k(x)]^\alpha$; thus, the claim is proven.
\end{proof}

This identity (a well-documented structural property of the Kaniadakis deformation; see \citet{kaniadakis2005}) is, in fact, the fundamental tool that allows us to compute the $I^{(\alpha)}$ value for the $\kappa$-Gaussian distribution in closed form in Section~\ref{sec:distribution}; even though this calculation essentially relies on that specific trick. However, for Tsallis’s $q$-exponential function, there is no genuine counterpart of comparable simplicity. In this case, no matter how closely you examine $[\exp_q(x)]^\alpha$, this expression cannot be reduced, at least not clearly, to a deformed exponential function from the same family.
\section{The Multivariate $\kappa$-Gaussian Distribution}
\label{sec:distribution}

\subsection{Derivation via the maximum entropy principle}
\label{sec:mep-derivation}
We are looking for the function $f$ on $\R^m$ that maximizes $S_\kappa[f]$ under the conditions $\int f = 1$, $\int x f(x)\,dx = \mu$, and $\int (x-\mu)(x-\mu)^T f(x)\,dx = \Sigma$. The variational derivative of $S_\kappa$ is given by:
\begin{equation}
\frac{\delta S_\kappa}{\delta f} = \frac{(1-\kappa)f^{-\kappa} - (1+\kappa)f^{\kappa}}{2\kappa}.
\end{equation}
If we equate this to the Lagrangian linear-quadratic form $-\lambda_0 - \lambda_1^Tx - x^T\Lambda_2 x$ and solve for $f$, we obtain a maximizer of the $\kappa$-exponential form:
\begin{equation}\label{eq:kappa-gaussian}
f_\kappa(x) = A\,\expk\!\left(-\beta\,(x-\mu)^T \Sigma^{-1}(x-\mu)\right), \qquad Q(x):=(x-\mu)^T\Sigma^{-1}(x-\mu),
\end{equation}
where $A=A(\Sigma,\beta,\kappa)>0$ and $\beta=\beta(\kappa,m)>0$ determined jointly by the normalization and second-moment constraints; explicit closed-form expressions for both appear in Section~\ref{sec:normalizing-constant}. As $\kappa \to 0$, $\exp k(-u) \to e^{-u}$ and $f_\kappa \to \mathcal{N}(\mu,\Sigma)$.

\subsection{Normalizing constant and scale parameter}
\label{sec:normalizing-constant}
Based on the expression $y=\Sigma^{-1/2}(x-\mu)$, we can simply normalize the function $g(y)=A_0\exp k(-\beta\|y\|^2)$; here, $A=A_0/|\Sigma|^{1/2}$. Using polar coordinates,
\begin{equation}
\int_{\R^m} g(y)\,dy = A_0\,\omega_{m-1}\int_0^\infty r^{m-1}\exp k(-\ \beta r^2)\,dr =: A_0\,\omega_{m-1}\,J_m(\beta,\kappa), \qquad \omega_{m-1}=\frac{2\pi^{m/2}}{\Gamma(m/2)}.
\end{equation}
Let $t=\kappa\beta r^2$,
\begin{equation}\label{eq:Jm-Km}
J_m(\beta,\kappa) = \frac{1}{2}(\kappa\beta)^{-m/2}\int_0^\infty t^{m/2-1}\left(\sqrt{1+t^2}-t\right)^{1/\kappa} dt =: \frac{1}{2}(\kappa\beta)^{-m/2}K_m(\kappa),
\end{equation}
and using $t=\sinh\theta$ (i.e., $\sqrt{1+t^2}-t=e^{-\theta}$),
\begin{equation}\label{eq:radial-integral}
K_m(\kappa) = \int_0^\infty (\sinh\theta)^{m/2-1}\cosh\theta\; e^{-\theta/\kappa}\, d\theta,
\end{equation}
this can be expressed using the Gaussian hypergeometric function $\,_2F_1$ (equivalently, for an integer or half-integer $m$, as a sum of Beta functions following the binomial expansion of $(\sinh\theta)^{m/2-1}$); throughout Section ~\ref{sec:simulation}, we calculated the value of $K_m (\kappa)$ not via symbolic hypergeometric reduction, but by direct numerical quadrature of the expression in \eqref{eq:radial-integral}, and we verified the resulting normalization constant by comparing it with the univariate ($m=1$) case published in \citet{kaniadakis2001,kaniadakis2002}.
Hence
\begin{equation}
\label{eq:normalizing-constant}
A(\Sigma,\beta,\kappa) = \frac{\Gamma(m/2)}{\pi^{m/2}\,|\Sigma|^{1/2}}\cdot\frac{(\kappa\beta)^{m/2}}{K_m(\kappa)}.
\end{equation}

Equation ~\eqref{eq:Jm-Km} clearly shows that $J_m$ depends on $\beta$: $J_m(\beta,\kappa)=\beta^{-m/2}J_m(1,\kappa)$. This scaling relation allows us to determine $\beta$ from the second-moment constraint directly and completes the construction of $f_\kappa$, which was left open in Section~\ref{sec:mep-derivation}. If we write $R = \|Y\|$ for $Y = \Sigma^{-1/2}(X - \mu)$, then the constraint $\int(x - \mu)(x - \mu)^T f_\kappa(x) \, dx = \Sigma$, due to the isotropy of $Y$, is equivalent to $\E[R^2] = \operatorname{tr}(I_m) = m$. Using \eqref{eq:Jm-Km} for both $J_m$ (with normalization constant $A_0$) and $J_{m+2}$ ($\E[R^2] = A_0\,\omega_{m-1}J_{m+2}(\beta,\kappa)$),
\begin{equation}
\E[R^2] = \frac{1}{\kappa\beta}\cdot\frac{K_{m+2}(\kappa)}{K_m(\kappa)},
\end{equation}
and setting this equal to $m$, we obtain the scale parameter in closed form:
\begin{equation}\label{eq:beta-closed-form}
\beta(\kappa,m) = \frac{K_{m+2}(\kappa)}{\kappa\, m\, K_m(\kappa)}.
\end{equation}
When we substitute expression \eqref{eq:beta-closed-form} into expression \eqref{eq:normalizing-constant}, $A$ becomes solely a function of $(\Sigma,\kappa)$; thus, the $f_\kappa$ in equation \eqref{eq:kappa-gaussian} is determined entirely by $(\mu,\Sigma,\kappa)$ without any free parameters, as expected from a true maximum-entropy distribution.

\subsection{Closed-form entropy of the $\kappa$-Gaussian}
\label{sec:closed-form-entropy}
Now, to compute the value of $S_\kappa[f_\kappa]$ in closed form, we use Lemma ~\ref{lem:scaling}, which is the upper bound required for the null hypothesis in the GOF statistic in Section ~\ref{sec:test}, and this is where the new algebraic identity proves its worth. For $\alpha \in \{1-\kappa, 1+\kappa\}$,
\begin{align}
I^{(\alpha)}[f_\kappa] &= A^\alpha \int_{\R^m} \big[\exp k(-\beta Q(x))\big]^\alpha \, dx
= A^\alpha \int_{\R^m} \exp_{\kappa/\alpha}(-\alpha\beta Q(x))\, dx \notag \\
&= A^\alpha\, |\Sigma|^{1/2}\,\omega_{m-1}\, J_m(\alpha\beta,\, \kappa/\alpha),
\label{eq:Ialpha-kappa-gaussian}
\end{align}
This is obtained by applying Lemma~\ref{lem:scaling} pointwise and using the same radial reduction as in Section~\ref{sec:normalizing-constant}. Substituting the expression \eqref{eq:Ialpha-kappa-gaussian} into \eqref{eq:decomposition} with $\alpha=1-\kappa$ and $\alpha=1+\kappa$, and using the value of $\beta=\beta(\kappa,m)$ obtained from \eqref{eq:beta-closed-form}, we obtain the following result:
\begin{equation}
\label{eq:closed-form-entropy}
S_\kappa[f_\kappa] = \frac{1}{2\kappa}\Big[A^{1-\kappa}|\Sigma|^{1/2}\omega_{m-1}J_m\big((1-\kappa)\beta,\tfrac{\kappa}{1-\kappa}\big) - A^{1+\kappa}|\Sigma|^{1/2}\omega_{m-1}J_m\big((1+\kappa)\beta,\tfrac{\kappa}{1+\kappa}\big)\Big],
\end{equation}
(for a given $K_m$) an expression in a completely closed form in terms of only $\mu$, $\Sigma$, and $\kappa$; here, $\mu$ is factored out of the expression due to translation invariance. Equation $\eqref{eq:closed-form-entropy}$ fully captures the role played by the closed-form Tsallis entropy of the $q$-Gaussian distribution in the test statistic of \citet{cadirci2025tsallis} and is the element that ensures the computability of $T_n$ in Section~\ref{sec:test}.

\subsection{Tail behaviour}
\label{sec:tail-behaviour}
For large $\|x\|$ values, $\exp k(-u) \sim (2\kappa u)^{-1/\kappa}$; which yields $f_\kappa(x) \sim C\|x\|^{-2/\kappa}$ this is a polynomial tail whose index depends only on $\kappa$ and is structurally distinct from the $q$-Gaussian tail index, which is jointly dependent on $(q,m)$.

\subsection{Variance-existence threshold}
\label{sec:variance-threshold}
The tail result in Section \ref{sec:tail-behaviour} precisely identifies which moments of the $\kappa$-Gaussian distribution are finite. If we focus solely on the normalization integral $J_m$, it is easy to overlook this detail. Nevertheless, this detail is of great importance when conducting finite-sample simulations or applying the model in practice.
\begin{proposition}[Variance-existence threshold]\label{prop:variance-threshold}
Assume \(X\sim f_\kappa(\,\cdot\,;\mu,\Sigma)\) is the isotropic standardized \(\kappa\)-Gaussian from Section \(\ref{sec:mep-derivation}\). Here \(\Sigma=I_m\).  
We look at when \(\mathbb{E}\|X\|^2\) is finite. This happens exactly when  
\[
\kappa < \frac{2}{m+2}.
\]
\end{proposition}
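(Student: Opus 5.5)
The plan is to reduce everything to a single one-dimensional radial integral and then read off convergence from the tail asymptotics of Section~\ref{sec:tail-behaviour}. With $\Sigma=I_m$, $\mu$ plays no role in finiteness, since $\|X\|^2\le 2\|X-\mu\|^2+2\|\mu\|^2$ and conversely, and the density is integrable. So it suffices to study $\E\|Y\|^2$ for $Y=X-\mu$. In polar coordinates, as in Section~\ref{sec:normalizing-constant},
\[
\E\|Y\|^2 = A_0\,\omega_{m-1}\int_0^\infty r^{m+1}\expk(-\beta r^2)\,dr = A_0\,\omega_{m-1}\,J_{m+2}(\beta,\kappa).
\]
The substitution $t=\kappa\beta r^2$ from \eqref{eq:Jm-Km} then shows this is finite if and only if
\[
\int_0^\infty t^{m/2}\bigl(\sqrt{1+t^2}-t\bigr)^{1/\kappa}\,dt<\infty,
\]
which is the integral defining $K_{m+2}(\kappa)$.

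Next I would split the integral at $t=1$. On $[0,1]$ the integrand is continuous and bounded, so this piece is finite. For $t\ge1$, I would use the exact identity $\sqrt{1+t^2}-t = 1/(\sqrt{1+t^2}+t)$. This gives the two-sided bound
\[
\frac{1}{(1+\sqrt2)t}\le \sqrt{1+t^2}-t\le \frac{1}{2t}.
\]
The integrand is therefore comparable, with constants depending only on $\kappa$, to $t^{m/2-1/\kappa}$ on $[1,\infty)$. This is just a rigorous version of the asymptotic $\expk(-u)\sim(2\kappa u)^{-1/\kappa}$ from Section~\ref{sec:tail-behaviour}.

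By the $p$-test, $\int_1^\infty t^{m/2-1/\kappa}\,dt<\infty$ if and only if $m/2-1/\kappa<-1$. Rearranging, this is $1/\kappa>(m+2)/2$, that is, $\kappa<2/(m+2)$. The two-sided comparison gives both directions of the ``exactly when''. I would also note in passing that the same argument with $t^{m/2-1}$ shows normalizability holds if and only if $\kappa<2/m$. This is automatically weaker, so the density is well defined throughout the range where the claim is meaningful.

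The main obstacle is not the calculus but a logical point about the model. The construction of $\beta$ in \eqref{eq:beta-closed-form} presupposes $K_{m+2}(\kappa)<\infty$. For $\kappa\ge 2/(m+2)$ the ``standardized'' density cannot satisfy the second-moment constraint, so the statement must be read for the family $A\,\expk(-\beta Q)$ with arbitrary fixed $\beta>0$. I would state this explicitly, remarking that finiteness does not depend on $\beta$ or $A$, since both only rescale the integral. This makes clear that the threshold is intrinsic to the tail index $2/\kappa$ and not to the moment-matching step.
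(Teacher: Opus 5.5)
Your proposal is correct and follows essentially the same route as the paper. Both reduce $\E\|X\|^2$ to a radial integral, compare the integrand with the power law $r^{m+1-2/\kappa}$ (equivalently $t^{m/2-1/\kappa}$) coming from the tail $\expk(-u)\sim(2\kappa u)^{-1/\kappa}$, and apply the $p$-test; your two-sided bound on $\sqrt{1+t^2}-t$ just makes the paper's asymptotic equivalence explicit in both directions. Your further remark is also valid and is something the paper's proof passes over: $\beta$ in \eqref{eq:beta-closed-form} is only defined when $K_{m+2}(\kappa)<\infty$, so the ``only if'' direction must be read for the family $A\,\expk(-\beta Q)$ with arbitrary fixed $\beta>0$.
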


\begin{proof}
Let $R = \|X\|$. According to the tail equivalence, when $\|x\| \to \infty$, $f_\kappa(x) \sim C\|x\|^{-2/\kappa}$ (Section~\ref{sec:tail-behaviour}), provided that the radial density of $R$ satisfies $\omega_{m-1}r^{m-1}f_\kappa(r) \sim C'r^{m-1-2/\kappa}$ as $r \to \infty$. We therefore have
\begin{equation}
\E[R^2] = \int_0^\infty r^2\cdot \omega_{m-1}r^{m-1}f_\kappa(r)\,dr
\end{equation}
has an integrand that behaves like $C'r^{m+1-2/\kappa}$ for values of $r$ that are large, and the integral approaches zero at infinity if and only if the exponent satisfies the condition $m + 1 - 2/\kappa < -1$, that is, $m + 2 < 2/\kappa$, and, $\kappa < 2/(m + 2)$; if $\kappa \geq 2/(m + 2)$, the integral diverges. Since the integral function is bounded near the origin (because $f_\kappa$ are bounded), convergence at infinity is the only necessary condition.

\end{proof}

\begin{remark}
Proposition~\ref{prop:variance-threshold} is the $\kappa$-Gaussian analogue of the classical requirement $\nu>2$ for the multivariate Student-$t_\nu$ distribution to have finite covariance. This was confirmed numerically in the simulations underlying Section~\ref{sec:simulation}: for $\kappa$ close to but below the threshold, $\E[R^2]$ is finite but large and slow to stabilize in finite samples; for $\kappa$ at or above the threshold, the empirical variance fails to converge as $N$ grows. All simulations in Section~\ref{sec:simulation} restrict $\kappa$ to a fixed fraction of this threshold (as a function of $m$) to stay within the regime where the standardization $\Sigma=I_m$ is meaningful. This constraint applies throughout the parameter space $(\mu,\Sigma,\kappa)$ wherever $\Sigma$ is interpreted as the covariance matrix (as opposed to a general scale matrix), as reflected in Assumption~\ref{ass:A1} and in the statement of the GOF test in Section~\ref{sec:test}.
\end{remark}

\section{Nearest-Neighbour Distances: Formal Definition}
\label{sec:nn}
We adopt the same $k$-NN distance construction used in \citet{cadirci2022entropy,cadirci2025renyi,cadirci2025tsallis}, tracing to \citet{kozachenko1987} and \citet{leonenko2008}.

\begin{definition}[$k$-th nearest neighbour distance]
\label{def:nn}
Suppose that $X_1,\dots,X_n$ are independent and identically distributed random vectors in $\R^m$ with density $f$, and let $k \in \{1,\dots,n-1\}$. For $i = 1, \dots, n$, let us arrange the Euclidean distances $\{\|X_i - X_j\|: j \neq i\}$ in ascending order as follows:
\begin{equation}
\rho_{1,i} \leq \rho_{2,i} \leq \cdots \leq \rho_{n-1,i}.
\end{equation}
The \emph{$k$-th nearest neighbour distance} of $X_i$ is $\rho_{k,i} = \rho_{k,i}(X_1,\dots,X_n) := \rho_{k,i}$, equivalently
\begin{equation}
\rho_{k,i} = \min\Big\{ r>0 \;:\; \big|\{j\neq i : \|X_i-X_j\|\leq r\}\big| \geq k \Big\},
\end{equation}
meaning the radius of the smallest closed sphere centered at point $X_i$ and containing at least $k$ of the remaining $n-1$ sampling points.
\end{definition}

\begin{definition}[Local $k$-NN density estimator]
\label{def:nn-density}
With $V_m = \pi^{m/2}/\Gamma(m/2+1)$ the volume of the unit ball in $\R^m$, the standard $k$-NN density estimator at $X_i$ is
\begin{equation}
\hat f_{k,i} = \frac{k}{(n-1)\,V_m\,\rho_{k,i}^m}.
\end{equation}
\end{definition}

The distributional starting point for all consistency results below is the exact finite-sample law of $\rho_{k,i}$: conditional on $X_i=x$, the random variable $N_i(r) = |\{j\neq i : \|X_i-X_j\|\leq r\}|$ is Binomial$(n-1, p(x,r))$ with $p(x,r)=\int_{B(x,r)}f(u)\,du$, and $\{\rho_{k,i}\leq r\} = \{N_i(r)\geq k\}$; this is the basis for the moment expansions of \citet{leonenko2008}, which we invoke as Lemma~\ref{lem:lps} below.

\section{Nearest-Neighbour Estimation of $\kappa$-Entropy}
\label{sec:estimator}
Taking into account that $X_i = x$, the scaled $k$-NN distance $Y_{k,i}(x) := (n-1)V_m f(x)\rho_{k,i}^m$ converges in the distribution to a Gamma$(k,1)$ random variable as $n \to \infty$ (which is the standard Poisson process approximation for $k$-NN distances; \citealp{leonenko2008}), so for $k+1-\alpha > 0$, $\E\big[Y_{k,i}(x)^{1-\alpha}\big] \to \Gamma(k+1-\alpha)/\Gamma(k)$. Given that $Y_{k,i}(x)^{1-\alpha} = f(x)^{1-\alpha}\big[(n-1)V_m\rho_{k,i}^m\big]^{1-\alpha}$, the following holds:
\begin{equation}
\E\Big[\big[(n-1)V_m\rho_{k,i}^m\big]^{1-\alpha}\Big] \;\to\; \frac{\Gamma(k+1-\alpha)}{\Gamma(k)}\,f(x)^{\alpha-1},
\end{equation}
so an asymptotically unbiased estimator of $f(x)^{\alpha-1}$ requires multiplying $\big[(n-1)V_m\rho_{k,i}^m\big]^{1-\alpha}$ by the \emph{scalar} correction $C_{k,\alpha}=\Gamma(k)/\Gamma(k+1-\alpha)$. Averaging over $i$ and using $I^{(\alpha)}[f]=\E_{X\sim f}[f(X)^{\alpha-1}]$ gives the estimator below.

For $\alpha \neq 1$, define the power-sum estimator
\begin{equation}
\label{eq:estimator}
\widehat I_n^{(\alpha)} = C_{k,\alpha}\cdot\frac{1}{n}\sum_{i=1}^n \Big[(n-1)\,V_m\,\rho_{k,i}^m\Big]^{1-\alpha},
\qquad C_{k,\alpha} = \frac{\Gamma(k)}{\Gamma(k+1-\alpha)},
\end{equation}
where $C_{k,\alpha}$ is a \emph{multiplicative} correction factor applied outside the bracket: it corrects the $(1-\alpha)$ moment of the limiting Gamma$(k,1)$ law of the scaled $k$-NN distance $(n-1)V_m f(x)\rho_{k,i}^m$ (see the derivation above). Notice that the power $(1-\alpha)$ of $C_{k,\alpha}$ should not be taken as the base: Rather, writing $\big[(n-1)C_{k,\alpha}V_m\rho_{k,i}^m\big] ^{1-\alpha}$ produces an erroneous factor of $C_{k,\alpha}^{1-\alpha}\neq C_{k,\alpha}$ yields an erroneous factor and violates consistency; this is because the correction is calibrated based on the moment of the \emph{unweighted} scaled distance, not the rescaled version. So that, by \eqref{eq:decomposition}, the Kaniadakis entropy estimator is
\begin{equation}
\widehat S_{\kappa,n} = \frac{\widehat I_n^{(1-\kappa)} - \widehat I_n^{(1+\kappa)}}{2\kappa}.
\end{equation}

\subsection{Assumptions}

\begin{assumption}\label{ass:A1}
The function $f$ is bounded; the support of $f$ is a set such that $\supp(f) \subseteq \R^m$, $f$ is bounded away from $0$ by every compact subset of its (open) support, where $\int f(x)^{1-\kappa}dx,\ \int f(x)^{1+\kappa}dx < \infty$. This additionally requires the condition $\kappa < 2/(m+2)$ when $f = f_\kappa$ itself is a $\kappa$-Gaussian distribution and $\Sigma$ is taken as its covariance matrix (Proposition~\ref{prop:variance-threshold}).
\end{assumption}
\begin{assumption}\label{ass:A2}
The function $f$ is twice continuously differentiable in a neighborhood of every point in its support, $\nabla^2 f$ has a bounded value, and there exist values of $\gamma\in(0,1]$ and $L<\infty$ such that the following equality applies:
\begin{equation}
\Big|f(x+u) - \sum_{|\ell|\le 1} \tfrac{D^\ell f(x)}{\ell!}u^\ell\Big| \le L\|u\|^{1+\gamma} \qquad \text{for } u \text{, } \text{in a neighborhood of } 0.
\end{equation}
\end{assumption}
\begin{assumption}\label{ass:A3}
The constant $k \geq 1$ does not increase with $n$.
\end{assumption}
\begin{assumption}\label{ass:A4}
Because $\kappa \in (0,1)$, $1-\kappa, 1+\kappa \in (0,2)\setminus\{1\}$, which falls within the acceptable range of exponents for Assumption~\ref{ass:A1}.
\end{assumption}
\begin{assumption}\label{ass:A5}
$\E\big[\rho_{k,1}^{m(2+\delta)}\big] < \infty$ applies uniformly with respect to $n$ for some $\delta > 0$.
\end{assumption}
Assumptions \ref{ass:A1}–\ref{ass:A2} are equivalent to the smoothness and boundedness conditions under which moment expansions for force-weighted $k$-NN functionals have been derived in the works \citet{leonenko2008} and \citet{berrett2019}; Assumption \ref{ass:A5} corresponds to the moment condition employed in central limit arguments to stabilize Euclidean functionals, as reported in \citep{penrose2003,penrose2001}.

\subsection{A key moment lemma}

\begin{lemma}[Moment expansion for $k$-NN power functionals; \citealp{leonenko2008}]\label{lem:lps}
Under Assumptions~\ref{ass:A1}--\ref{ass:A2}, for fixed $k\geq1$ and $\alpha$ in a neighbourhood of $1$ with $\alpha\neq1$,
\begin{equation}
\E\Big[C_{k,\alpha}\big((n-1)V_m\rho_{k,i}^m\big)^{1-\alpha}\Big] = I^{(\alpha)}[f] + O\!\big(n^{-\gamma/m}\big),
\end{equation}
and
\begin{equation}
\Var\Big[C_{k,\alpha}\big((n-1)V_m\rho_{k,i}^m\big)^{1-\alpha}\Big] = O(1),
\end{equation}
uniformly over $i=1,\dots,n$, where $\gamma$ is the H\"older exponent of Assumption~\ref{ass:A2}.
\end{lemma}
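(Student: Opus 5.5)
The plan is the standard Leonenko--Pronzato--Savani route: condition on $X_i=x$ and then integrate over $x$. Write $\zeta_{n,i}=(n-1)V_m\rho_{k,i}^m$. Given $X_i=x$, the tail probability is
\[
P(\zeta_{n,i}>u\mid X_i=x)=P\bigl(N_i(r_n(u))<k\bigr),\qquad r_n(u)=\bigl(u/((n-1)V_m)\bigr)^{1/m}.
\]
By Section~\ref{sec:nn}, $N_i(r)$ is Binomial$(n-1,p(x,r))$. Assumption~\ref{ass:A2} gives the local expansion
\[
p(x,r)=V_mr^mf(x)\bigl(1+O(r^{1+\gamma}/f(x))\bigr),
\]
because the first-order Taylor term integrates to zero over the ball by symmetry. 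Hence $(n-1)p(x,r_n(u))=uf(x)+O(u^{1+(1+\gamma)/m}n^{-(1+\gamma)/m})$. The binomial-to-Poisson comparison then gives convergence of the conditional law of $\zeta_{n,i}$ to $f(x)^{-1}\Gamma(k,1)$ at rate $O(n^{-\gamma/m})$ on compact $u$-ranges, locally uniformly in $x$ on compacts of the support.

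For the expectation, write
\[
\E[\zeta_{n,i}^{1-\alpha}\mid X_i=x]=\int_0^\infty (1-\alpha)u^{-\alpha}P(\zeta_{n,i}>u\mid x)\,du
\]
for $\alpha<1$, and the analogous formula with the lower tail for $\alpha>1$. I would split this integral into $u\le\epsilon_n$, a middle range, and $u\ge M_n$.
\begin{itemize}
\item On the middle range, the expansion above gives $f(x)^{\alpha-1}\Gamma(k+1-\alpha)/\Gamma(k)+O(n^{-\gamma/m})$.
\item Near $0$, the binomial bound $P(N\ge k)\le((n-1)p)^k$ together with boundedness of $f$ makes the contribution small whenever $k+1-\alpha>0$. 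This holds because $\alpha<2$ by Assumption~\ref{ass:A4}.
\item For large $u$, I would use Chernoff/Markov bounds on the binomial, with $p(x,r)\to1$ as $r\to\infty$.
\end{itemize}
Multiplying by $C_{k,\alpha}$ and integrating against $f(x)\,dx$ gives $\int f^\alpha=I^{(\alpha)}[f]$ plus the error term. Dominated convergence (or a uniform error bound) is justified by $\int f^{1\pm\kappa}<\infty$ from Assumption~\ref{ass:A1}.

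For the variance, it suffices to bound the second moment $\E[\zeta_{n,i}^{2(1-\alpha)}]$ uniformly in $n$. This is the same computation with $1-\alpha$ replaced by $2(1-\alpha)$, and it gives $O(1)\cdot\int f^{2\alpha-1}$.
\begin{itemize}
\item For $\alpha=1+\kappa$ this requires $k>2\kappa-1$, which always holds, and $\int f^{1+2\kappa}<\infty$, which follows from $f$ bounded and $\int f<\infty$.
\item For $\alpha=1-\kappa$ it requires $\int f^{1-2\kappa}<\infty$. This does not follow from Assumption~\ref{ass:A1} directly. I would obtain it from the upper-tail control: Assumption~\ref{ass:A5} bounds $\E\rho_{k,1}^{m(2+\delta)}$, so $\E\zeta_{n,i}^{2\kappa}$ is controlled by Hölder's inequality once $2\kappa<2+\delta$, uniformly in $n$ after the $(n-1)$ scaling is absorbed via the local expansion.
\end{itemize}
The statement is uniform in $i$ by exchangeability.

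The main obstacle is the large-$u$ (large-distance) regime for $\alpha=1-\kappa<1$, where positive powers of $\rho$ appear. Here the $O(n^{-\gamma/m})$ rate cannot be uniform over $x$, since $f(x)\to0$ in the tails, and the local expansion degrades where $f$ is small. My first attempt would truncate to the compact set $\{f\ge\eta_n\}$ with $\eta_n\to0$ polynomially. On that set I would apply the local expansion. The complement would be controlled using Assumption~\ref{ass:A5} and the integrability of $f^{1-\kappa}$ over $\{f<\eta_n\}$, and $\eta_n$ would be chosen to balance the two errors. If this only gives $o(1)$ rather than $O(n^{-\gamma/m})$ on heavy-tailed $f$, I would state the bias rate under the additional tail condition implicit in \citet{leonenko2008}.
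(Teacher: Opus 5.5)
Your route is the one the paper has in mind: condition on $X_i=x$, use the Binomial law of $N_i(r)$, linearize $f$ with Assumption~\ref{ass:A2}, compare with the $\Gamma(k,1)/f(x)$ limit, and integrate against $f$. The paper gives no proof of its own here; it only cites \citet{leonenko2008} with that sketch. Your tail-integral/Poisson comparison in place of the beta-integral representation makes no difference.

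The genuine gap is the variance bound for $\alpha=1-\kappa$. H\"older applied to Assumption~\ref{ass:A5} (which is not among the lemma's hypotheses) controls only the \emph{unscaled} distance. It therefore only bounds $\E[\zeta_{n,i}^{2\kappa}]$ by $((n-1)V_m)^{2\kappa}(\E\rho_{k,1}^{m(2+\delta)})^{2\kappa/(2+\delta)}$, which grows like $n^{2\kappa}$. The factor $(n-1)^{2\kappa}$ cannot be ``absorbed via the local expansion'': the mass you must control sits exactly where $f(X_i)$ is small and $\rho_{k,i}$ is large, and there no such expansion is available.

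No other argument can close this step under Assumptions~\ref{ass:A1}--\ref{ass:A2} either. Since $\zeta_{n,i}\mid X_i=x$ converges in law to $\Gamma(k,1)/f(x)$, Fatou's lemma (in its weak-convergence form, then in $x$) gives $\liminf_n\E[\zeta_{n,i}^{2\kappa}]\ge\frac{\Gamma(k+2\kappa)}{\Gamma(k)}\int f^{1-2\kappa}$. The mean has a finite limit by the first display, so $\int f^{1-2\kappa}<\infty$ is \emph{necessary} for the $O(1)$ claim and must be added as a hypothesis. It does not follow from $\int f^{1-\kappa}<\infty$. Assumption~\ref{ass:A5} yields it (via $\E\|X\|^{m(2+\delta)}<\infty$ and H\"older) for $\kappa<(2+\delta)/(6+2\delta)$ but not beyond. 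For $f=f_\kappa$ itself it holds iff $\kappa<2/(m+4)$, strictly inside the threshold of Proposition~\ref{prop:variance-threshold}.

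There is also a slip for $\alpha=1+\kappa$. The small-$u$ condition for the second moment is $k+2(1-\alpha)>0$, i.e.\ $k>2\kappa$, not $k>2\kappa-1$. For $k=1$ and $\kappa\ge 1/2$, the conditional law of $\zeta_{n,i}$ has a density bounded away from $0$ near the origin, so $\E[\zeta_{n,i}^{-2\kappa}]=\infty$ for every $n$. This case is excluded only by the literal ``$\alpha$ in a neighbourhood of $1$'', not by the full range of Assumption~\ref{ass:A4} that you invoke.

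Finally, your closing fallback on the bias concedes the point rather than resolving it. On compact subsets of the support, your own expansion already gives an error of order $n^{-(1+\gamma)/m}+n^{-1}$, no worse than $n^{-\gamma/m}$. The rate is therefore decided entirely by the low-density region $\{f<\eta_n\}$. Its contribution depends on tails that Assumptions~\ref{ass:A1}--\ref{ass:A2} leave free beyond $\int f^{1-\kappa}<\infty$; when that integral is barely finite, this contribution decays arbitrarily slowly. There is also no tail condition ``implicit in'' \citet{leonenko2008} to fall back on, since that paper establishes asymptotic unbiasedness and $L^2$-consistency without a rate.

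What your argument can deliver is a corrected lemma, not the statement as written. The variance part needs the added hypotheses $\int f^{2\alpha-1}<\infty$ and $k>2(\alpha-1)$. The bias part needs an explicit tail or moment condition, with a rate exponent that depends on it.
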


\noindent This is the multivariate power-functional generalization of the classical Kozachenko--Leonenko bias/variance expansion, proved in \citet{leonenko2008} via the exact Binomial law of $N_i(r)$ described in Section~\ref{sec:nn} together with a local linearization of $f$ around $X_i$ (using Assumption~\ref{ass:A2}) and a beta-integral representation of the moments of $\rho_{k,i}^{m(1-\alpha)}$. We use Lemma~\ref{lem:lps} as the base case for Lemmas~\ref{lem:marginal}--\ref{lem:joint} below, rather than re-deriving it, since our contribution is the \emph{joint} (bivariate) extension needed for the $\kappa$-entropy difference structure \eqref{eq:decomposition}, not the single-exponent case itself.

\subsection{$L^2$-consistency}
\begin{lemma}[Marginal consistency]\label{lem:marginal}
Given the assumptions $\ref{ass:A1}$–$\ref{ass:A3}$, we have that for every constant $\alpha \in \{1-\kappa,\,1+\kappa\}$,
\begin{equation}
\E\big[\widehat I_n^{(\alpha)}\big] = I^{(\alpha)}[f] + O(n^{-\gamma/m}), \qquad
\Var\big(\widehat I_n^{(\alpha)}\big) = O(n^{-1}).
\end{equation}
Thus, when $n \to \infty$, $\widehat I_n^{(\alpha)} \to I^{(\alpha)}[f]$ in the $L^2$ space.
\end{lemma}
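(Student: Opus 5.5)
The plan is to reduce everything to Lemma~\ref{lem:lps} plus a bound on the covariances between different summands. Write $\zeta_{i}^{(\alpha)} := C_{k,\alpha}\big((n-1)V_m\rho_{k,i}^m\big)^{1-\alpha}$, so that $\widehat I_n^{(\alpha)} = n^{-1}\sum_{i=1}^n \zeta_i^{(\alpha)}$. Since $\kappa\in(0,1)$, both exponents $\alpha=1\pm\kappa$ satisfy $k+1-\alpha>0$ for every $k\ge 1$ (Assumption~\ref{ass:A4}), so $C_{k,\alpha}$ is finite and Lemma~\ref{lem:lps} applies to each of them.

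\textbf{Bias.} Linearity of expectation and the uniformity in $i$ in Lemma~\ref{lem:lps} give
\[
\E\big[\widehat I_n^{(\alpha)}\big]=\frac1n\sum_{i=1}^n \E\big[\zeta_i^{(\alpha)}\big] = I^{(\alpha)}[f]+O(n^{-\gamma/m}).
\]
Exchangeability of the sample means that all summands have the same mean anyway, so this step is immediate.

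\textbf{Variance.} Exchangeability again gives
\[
\Var\big(\widehat I_n^{(\alpha)}\big)=\frac1n\Var\big(\zeta_1^{(\alpha)}\big)+\frac{n-1}{n}\Cov\big(\zeta_1^{(\alpha)},\zeta_2^{(\alpha)}\big).
\]
The first term is $O(n^{-1})$ by the variance bound in Lemma~\ref{lem:lps}. The main obstacle is to show that the cross-covariance is itself $O(n^{-1})$, because the $k$-NN distances of different points are dependent.

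I would handle the covariance with the standard decoupling argument of \citet{leonenko2008} and \citet{evans2008}, which has three steps.
\begin{enumerate}
\item Condition on $X_1=x$ and $X_2=y$. Let $\tilde\rho_{k,1}$ and $\tilde\rho_{k,2}$ be the $k$-NN distances computed only from $X_3,\dots,X_n$. They differ from $\rho_{k,1},\rho_{k,2}$ only on the event that $X_2$ lies among the $k$ nearest neighbours of $X_1$, or vice versa. By the Binomial representation of Section~\ref{sec:nn}, this event has probability $O(k/n)$.
\item Show that, given $X_1,X_2$, the decoupled quantities $\tilde\zeta_1$ and $\tilde\zeta_2$ are asymptotically independent. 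The balls $B(x,\tilde\rho_{k,1})$ and $B(y,\tilde\rho_{k,2})$ have radii of order $n^{-1/m}$, so for $x\neq y$ they are disjoint with probability $1-O(n^{-1})$. On that event the counts $N_1$ and $N_2$ form a multinomial vector whose covariance is $O(n^{-1})$ relative to independence.
\item Bound the contribution of the exceptional events using Cauchy--Schwarz. This requires a uniform bound on a moment of $\zeta_i^{(\alpha)}$ slightly above the second. For $\alpha=1+\kappa$ the exponent $1-\alpha=-\kappa$ is negative, so the moment comes from the small-ball behaviour, which is controlled because $f$ is bounded (Assumption~\ref{ass:A1}). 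For $\alpha=1-\kappa$ it comes from Assumption~\ref{ass:A5}, using $m\kappa(2+\delta')\le m(2+\delta)$.
\end{enumerate}
Together these give $\Cov(\zeta_1,\zeta_2)=O(n^{-1})$, and hence $\Var\big(\widehat I_n^{(\alpha)}\big)=O(n^{-1})$.

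\textbf{Conclusion.} Combining the two estimates,
\[
\E\big[(\widehat I_n^{(\alpha)}-I^{(\alpha)}[f])^2\big]=\Var\big(\widehat I_n^{(\alpha)}\big)+\big(\E\widehat I_n^{(\alpha)}-I^{(\alpha)}[f]\big)^2=O(n^{-1})+O(n^{-2\gamma/m})\to0,
\]
which is the claimed $L^2$ convergence. I expect step~2 of the variance argument, making the near-independence of the two decoupled distances quantitative, to be the hardest part. If a direct multinomial computation becomes unwieldy, I would instead invoke the Efron--Stein inequality. Resampling one point changes at most $O(k)$ of the summands, since a point is a $k$-NN of at most $k\,c_m$ others, where $c_m$ is a kissing-type constant. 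Each changed summand moves by an amount whose second moment is bounded by the step-3 moment bound. This gives $\Var\big(\widehat I_n^{(\alpha)}\big)\le n\cdot O(n^{-2})=O(n^{-1})$ and avoids the covariance computation entirely.
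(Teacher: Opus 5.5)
Your exchangeability decomposition $\Var(\widehat I_n^{(\alpha)})=\frac1n\Var(\zeta_1^{(\alpha)})+\frac{n-1}{n}\Cov(\zeta_1^{(\alpha)},\zeta_2^{(\alpha)})$ takes a different and sounder route than the paper. The paper treats the summands as a dependency graph of bounded degree, so that only $O(n)$ covariances $\Cov(Y_i,Y_j)$ are nonzero, each $O(1)$ by Cauchy--Schwarz. That step fails as written. The relation ``the $k$-NN sets of $i$ and $j$ intersect'' is random, and $\rho_{k,i}$ depends on every sample point. By exchangeability all $n(n-1)$ off-diagonal covariances are equal, so ``only $O(n)$ are nonzero'' would force them all to vanish. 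What must actually be shown is exactly what you isolate: $\Cov(\zeta_1,\zeta_2)=O(n^{-1})$.

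Your decoupling argument, however, has a gap at precisely that point. In step~3 you handle exceptional events $E$ with $\Pr(E)=O(n^{-1})$ by Cauchy--Schwarz plus a moment of order $2+\delta'$. That gives only $|\E[\zeta_1\zeta_2\mathbf{1}_E]|\le\|\zeta_1\|_{2+\delta'}\|\zeta_2\|_{2+\delta'}\Pr(E)^{\delta'/(2+\delta')}=O(n^{-\delta'/(2+\delta')})$. This yields $\Var\to0$ and the $L^2$ conclusion, but not $\Var=O(n^{-1})$. To reach $O(n^{-1})$ you must control moments \emph{on} $E$, not unconditional ones. For $E=\{X_2\in\mathrm{NN}_k(X_1)\}$, exchangeability gives $\E[\zeta_1^2\mathbf{1}_E]=\frac{k}{n-1}\E[\zeta_1^2]$ with only second moments. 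The overlapping-balls event in step~2 needs its own argument of this kind. Step~3 also uses Assumption~\ref{ass:A5}, which is not a hypothesis of this lemma. Moreover, A5 bounds $\E[\rho_{k,1}^{m(2+\delta)}]$, not the scaled moment $\E[((n-1)V_m\rho_{k,1}^m)^{\kappa(2+\delta')}]$ that you need uniformly in $n$.

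Your Efron--Stein fallback is the argument that works, and it needs only the $O(1)$ second moments of Lemma~\ref{lem:lps}. Replace $X_j$ by an independent copy $X_j'$. This changes only $\zeta_j$ and those $\zeta_i$ for which $X_j$ (before) or $X_j'$ (after) lies in the $k$-NN set of $X_i$. That is a set $S_j$ of at most $2kc_m+1$ indices, deterministically. Hence the squared increment of $\widehat I_n^{(\alpha)}$ is at most $Cn^{-2}\sum_{i\in S_j}\big(\zeta_i^2+(\zeta_i^{[j]})^2\big)$.

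The point you gloss over is that $S_j$ is selected by the configuration, so you cannot simply plug in $\E[\zeta_i^2]=O(1)$. The same exchangeability identity fixes this: $\sum_{i\neq j}\E[\zeta_i^2\mathbf{1}\{X_j\in\mathrm{NN}_k(X_i)\}]=k\,\E[\zeta_1^2]$. The mixed terms reduce in the same way to the $(k+1)$-th NN distance. Summing over $j$ then gives $\Var(\widehat I_n^{(\alpha)})=O(n^{-1})$.

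One further correction concerns $\alpha=1+\kappa$. Boundedness of $f$ gives $\Pr((n-1)V_m\rho_{k,1}^m\le t)\le(\|f\|_\infty t)^k/k!$. This makes $\E[((n-1)V_m\rho_{k,1}^m)^{-s\kappa}]$ finite only when $s\kappa<k$. For $k=1$ and $\kappa\ge 1/2$, even $\Var(\zeta_1^{(1+\kappa)})=\infty$. So the lemma tacitly requires $k>2\kappa$, a restriction the paper hides in the phrase ``$\alpha$ in a neighbourhood of $1$'' in Lemma~\ref{lem:lps}.
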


\begin{proof}
When the deviation bound obtained from Lemma $\ref{lem:lps}$ is averaged over $i = 1, \dots, n$, the specified deviation is obtained; for this value holds uniformly for all values of $i$. Let $\widehat I_n^{(\alpha)} = \frac1n\sum_i Y_i$ denote the variance, where $Y_i := C_{k,\alpha}\big((n-1)V_m\rho_{k,i}^m\big)^{1-\alpha}$. According to Lemma~\ref{lem:lps}, we have $\Var(Y_i)=O(1)$; the $Y_i$’s have no independence, however $\rho_{k,i}$ has only depends on the $k$ points closest to $X_i$ . Thus, when the $k$-NN sets of $i$ and $j$ intersect, the maximum degree of the dependency graph over $\{1,\dots,n\}$ is bounded by a constant $c(k,m)$ independent of $n$ (this is a standard consequence of the bounded-degree property of $k$-NN graphs in $\R^m$; \citealp{redmond1996,penrose2003}). Therefore
\begin{equation}
\Var\big(\widehat I_n^{(\alpha)}\big) = \frac{1}{n^2}\sum_{i,j}\Cov(Y_i,Y_j) = \frac{1}{n^2}\Big[\sum_i \Var(Y_i) + \sum_{i\neq j: i\sim j}\Cov(Y_i,Y_j)\Big] = \frac{1}{n^2}\big[O(n) + O(n)\big] = O(n^{-1}).
\end{equation}
Here, we have $O(n)$ nonzero terms in the second sum (its order is bounded); by the Cauchy–Schwarz inequality, each term is $O(1)$, and $\Var(Y_i) = O(1)$. Since the bias$^2 + $variance$\to0$, $L^2$-convergence follows.
\end{proof}

\begin{lemma}[Joint consistency]
\label{lem:joint}
Under Assumptions~\ref{ass:A1}--\ref{ass:A4},
\begin{equation}
\big\|(\widehat I_n^{(1-\kappa)},\widehat I_n^{(1+\kappa)}) - (I^{(1-\kappa)}[f],I^{(1+\kappa)}[f])\big\|_2 \to 0,
\end{equation}
and
\begin{equation}
\Cov\big(\widehat I_n^{(1-\kappa)},\widehat I_n^{(1+\kappa)}\big) = O(n^{-1}).
\end{equation}
\end{lemma}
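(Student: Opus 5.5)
The plan is to reduce both claims to Lemma~\ref{lem:marginal} applied separately to $\alpha=1-\kappa$ and $\alpha=1+\kappa$. Assumption~\ref{ass:A4} guarantees that both exponents lie in the admissible range $(0,2)\setminus\{1\}$. It also gives $k+1-\alpha>0$ for $k\ge1$, so both constants $C_{k,\alpha}$ are finite. Hence Lemma~\ref{lem:lps}, and therefore Lemma~\ref{lem:marginal}, applies to each exponent.

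For the first claim I would write the squared Euclidean norm of the deviation vector as a sum of two coordinates. Taking expectations gives
\[
\E\big\|(\widehat I_n^{(1-\kappa)},\widehat I_n^{(1+\kappa)}) - (I^{(1-\kappa)}[f],I^{(1+\kappa)}[f])\big\|^2
= \sum_{\alpha\in\{1-\kappa,1+\kappa\}} \Big[\big(\E\widehat I_n^{(\alpha)}-I^{(\alpha)}[f]\big)^2 + \Var\big(\widehat I_n^{(\alpha)}\big)\Big].
\]
By Lemma~\ref{lem:marginal} this is $O(n^{-2\gamma/m})+O(n^{-1})\to0$. This yields $L^2$ convergence of the vector, which is the natural reading of the $\|\cdot\|_2$ in the statement.

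For the covariance bound, the crude route is Cauchy--Schwarz:
\[
|\Cov(\widehat I_n^{(1-\kappa)},\widehat I_n^{(1+\kappa)})|\le \Var(\widehat I_n^{(1-\kappa)})^{1/2}\Var(\widehat I_n^{(1+\kappa)})^{1/2}=O(n^{-1/2})\,O(n^{-1/2})=O(n^{-1}).
\]
This is already enough, and I would present it as the main argument. As a cross-check, and to prepare the joint CLT later, I would also expand the covariance directly. Set $Y_i^{\pm}:=C_{k,1\mp\kappa}\big((n-1)V_m\rho_{k,i}^m\big)^{\pm\kappa}$, so that
\[
\Cov\big(\widehat I_n^{(1-\kappa)},\widehat I_n^{(1+\kappa)}\big)=n^{-2}\sum_{i,j}\Cov(Y_i^+,Y_j^-).
\]
Then I would reuse the dependency-graph argument of Lemma~\ref{lem:marginal}. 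Since both $Y_i^{+}$ and $Y_i^{-}$ are functions of the same $\rho_{k,i}$, the dependency graph is the same bounded-degree $k$-NN graph with degree at most $c(k,m)$. That leaves $O(n)$ nonvanishing pairs, each bounded by $\Var(Y_i^+)^{1/2}\Var(Y_j^-)^{1/2}=O(1)$ via Lemma~\ref{lem:lps}.

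The only delicate point is the $O(1)$ second moment of $Y_i^{-}$, the negative power $\rho_{k,i}^{-m\kappa}$, which could blow up where nearest-neighbour distances are tiny. I would handle it by invoking Lemma~\ref{lem:lps} at $\alpha=1+\kappa$: squaring gives $\rho^{-2m\kappa}$, which corresponds to the effective exponent $1+2\kappa$. This requires $k>2\kappa$, and that holds automatically since $k\ge1>2\kappa$ whenever $\kappa<1/2$. If $\kappa\ge1/2$, I would note that $k\ge2$ suffices, or else appeal directly to the variance statement of Lemma~\ref{lem:lps} as given. I would also note that the boundedness of $f$ in Assumption~\ref{ass:A1} controls the small-ball probabilities in this regime.
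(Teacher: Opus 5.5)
Your argument is correct. For the first claim it matches the paper: the expected squared norm splits into the two marginal MSEs, each controlled by Lemma~\ref{lem:marginal}. You rightly see that no covariance information is needed there, even though the paper mentions $\Cov\to0$ at that step.

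For the covariance bound, your main route is genuinely different and shorter. The paper expands
$\Cov\big(\widehat I_n^{(1-\kappa)},\widehat I_n^{(1+\kappa)}\big)$ as $n^{-2}\big[\sum_i\Cov(Y_i,Z_i)+\sum_{i\sim j}\Cov(Y_i,Z_j)\big]$. Here the paper's $Y_i,Z_i$ are your $Y_i^{+},Y_i^{-}$. It then re-runs the bounded-degree dependency count of Lemma~\ref{lem:marginal} and bounds each of the $O(n)$ surviving terms by Cauchy--Schwarz; this is exactly your ``cross-check''.

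You instead apply Cauchy--Schwarz once, to the averaged estimators. The covariance bound then becomes a one-line corollary of the two marginal variance bounds and inherits whatever justification they have. In particular, you do not invoke a second time the heuristic step that $\Cov(Y_i,Z_j)=0$ whenever the neighbourhoods do not overlap. That step is only heuristic, since overlap is itself random and $k$-NN distances of distinct points are not exactly uncorrelated.

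What the paper's pairwise expansion buys is structure. It displays the form of $n\Cov(\cdot,\cdot)$ whose limit is the $\Gamma_{12}$ of Theorem~\ref{thm:clt}. Aggregate Cauchy--Schwarz gives only $\limsup_n n\,|\Cov(\cdot,\cdot)|<\infty$, although neither argument actually shows that the limit exists.

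Your side remark that the second moment of $Y_i^{-}$ needs $k>2\kappa$ is correct. It exposes a restriction the paper hides in ``$\alpha$ in a neighbourhood of $1$''. For $k=1$ and $\kappa\ge 1/2$ one has $\E[\rho_{1,i}^{-2m\kappa}]=\infty$, so falling back on Lemma~\ref{lem:lps} ``as given'' would not rescue that regime.

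It is also not the only delicate point. $\E[(Y_i^{+})^2]$ can stay bounded only if $\int f^{1-2\kappa}<\infty$, and Assumption~\ref{ass:A1} does not provide this. For the $\kappa$-Gaussian, this condition requires $\kappa<2/(m+4)$. Like the paper, you ultimately import both variance bounds from Lemma~\ref{lem:lps}, so this does not separate your proof from the paper's.
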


\begin{proof}
Let $\widehat I_n^{(1-\kappa)}=\frac1n\sum_i Y_i$; where $Y_i := C_{k,1-\kappa}\big((n-1)V_m\rho_{k,i}^m\big)^{\kappa}$ and $\widehat I_n^{(1+\kappa)} = \frac{1}{n}\sum_i Z_i$; where $Z_i := C_{k,1+\kappa}\big((n-1)V_m\rho_{k,i}^m\big)^{-\kappa}$; both $Y_i$ and $Z_i$ have the same $k$-NN distance $\rho_{k,i}$ as a function (raised to different powers), therefore, they are not \emph{independent} for even a single $i$; also, for $i \neq j$, $\Cov(Y_i, Z_j) \neq 0$ applies only when the $k$-NN neighborhoods of $i$ and $j$ overlap again, due to the finite-degree property, there are $O(n)$ such pairs. Therefore,
\begin{equation}
\Cov\big(\widehat I_n^{(1-\kappa)},\widehat I_n^{(1+\kappa)}\big) = \frac{1}{n^2}\Big[\sum_i \Cov(Y_i,Z_i) + \sum_{i\neq j: i\sim j}\Cov(Y_i,Z_j)\Big].
\end{equation}
Following the Cauchy–Schwarz inequality as well as Lemma~\ref{lem:lps}, $|\Cov(Y_i,Z_i)| \le \sqrt{\Var(Y_i)\Var(Z_i)} = O(1)$, and applies similarly to the cross-pairs; therefore both sums equal $O(n)$, resulting in$\Cov(\widehat I_n^{(1-\kappa)},\widehat I_n^{(1+\kappa)}) = O(n^{-1})$. The common $L^2$-convergence is obtained from the marginal $L^2$-convergence (Lemma~\ref{lem:marginal}) and the fact that $\Cov \to 0$ as following:
\begin{equation}
\E\big[\|(\widehat I_n^{(1-\kappa)},\widehat I_n^{(1+\kappa)}) - (I^{(1-\kappa)}[f],I^{(1+\kappa)}[f])\|_2^2\big] = \mathrm{MSE}(\widehat I_n^{(1-\kappa)}) + \mathrm{MSE}(\widehat I_n^{(1+\kappa)}) \to 0. \qedhere
\end{equation}
\end{proof}

\begin{theorem}[$L^2$ consistency of $\widehat S_{\kappa,n}$]
\label{thm:consistency}
Assuming the Assumptions $\ref{ass:A1}$–$\ref{ass:A4}$, if $k \geq 1$ is a constant and $\kappa \in (0,1)$, then
\begin{equation}
\E\big[(\widehat S_{\kappa,n} - S_\kappa[f])^2\big] = O\!\left(\frac{1}{\kappa^2}\Big(n^{-2\gamma/m} + n^{-1}\Big)\right) \longrightarrow 0 \quad \text{as $n \to \infty$}.
\end{equation}
\end{theorem}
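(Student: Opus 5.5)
The plan is to reduce the statement to Lemmas~\ref{lem:marginal} and \ref{lem:joint} by a bias--variance decomposition of the linear combination defining $\widehat S_{\kappa,n}$. By \eqref{eq:decomposition} and the definition of the estimator,
\begin{equation*}
\widehat S_{\kappa,n} - S_\kappa[f] = \frac{1}{2\kappa}\Big[\big(\widehat I_n^{(1-\kappa)} - I^{(1-\kappa)}[f]\big) - \big(\widehat I_n^{(1+\kappa)} - I^{(1+\kappa)}[f]\big)\Big],
\end{equation*}
so the mean squared error equals $\frac{1}{4\kappa^2}\E[(D_1-D_2)^2]$, where $D_1$ and $D_2$ denote the two bracketed deviations. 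The prefactor $1/(4\kappa^2)$ is the source of the $\kappa^{-2}$ in the stated rate. Assumption~\ref{ass:A4} guarantees that both exponents $1\pm\kappa$ lie in the admissible range for Lemma~\ref{lem:lps}, and hence for Lemma~\ref{lem:marginal}.

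Next I will split $\E[(D_1-D_2)^2]$ into its squared mean and its variance. The squared mean is $(\E D_1 - \E D_2)^2 \le 2(\E D_1)^2 + 2(\E D_2)^2$. Lemma~\ref{lem:marginal} gives $\E D_j = O(n^{-\gamma/m})$ for each $j$, so this part is $O(n^{-2\gamma/m})$. The variance is
\begin{equation*}
\Var(D_1-D_2)=\Var\big(\widehat I_n^{(1-\kappa)}\big)+\Var\big(\widehat I_n^{(1+\kappa)}\big)-2\Cov\big(\widehat I_n^{(1-\kappa)},\widehat I_n^{(1+\kappa)}\big).
\end{equation*}
Both variances are $O(n^{-1})$ by Lemma~\ref{lem:marginal}, and the covariance is $O(n^{-1})$ by Lemma~\ref{lem:joint}. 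A simpler alternative avoids the covariance entirely: the bound $(D_1-D_2)^2\le 2D_1^2+2D_2^2$ gives the same order from the marginal mean squared errors alone. I will note this as a cross-check. Combining the two parts yields $\E[(\widehat S_{\kappa,n}-S_\kappa[f])^2] = O\big(\kappa^{-2}(n^{-2\gamma/m}+n^{-1})\big)$, and this tends to $0$ as $n\to\infty$ for fixed $\kappa\in(0,1)$ and fixed $k$ (Assumption~\ref{ass:A3}).

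The main obstacle is not the algebra but making sure the $O(\cdot)$ constants from Lemma~\ref{lem:lps} are legitimately uniform. They must be uniform over $i$, which the lemma states, and they must be finite at both exponents $1-\kappa$ and $1+\kappa$. For the exponent $1+\kappa$, the summand involves $\rho_{k,i}^{-m\kappa}$, and its second moment requires $k+1-2(1+\kappa)+1 > 0$, that is, $k>2\kappa$, for the limiting Gamma moments $\Gamma(k+1-\alpha)$ and $\Gamma(k+2-2\alpha)$ to be finite. I would first check that the variance claim of Lemma~\ref{lem:lps} covers this, and if necessary add the mild requirement $k\ge 2$ or $\kappa<k/2$. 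For the exponent $1-\kappa$, the positive power of $\rho_{k,i}$ needs a tail moment of $f$, and I would take it from the integrability $\int f^{1-\kappa}<\infty$ in Assumption~\ref{ass:A1}. With these constants in hand, the result is a direct corollary of the two lemmas. I will not track the dependence of the constants on $\kappa$ beyond the explicit $\kappa^{-2}$ prefactor, which is all the statement requires.
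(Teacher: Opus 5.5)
Your argument is the paper's proof: the same decomposition of $\E[D_n^2]$ into the two variances, the covariance and the squared bias difference, bounded by Lemmas~\ref{lem:marginal}--\ref{lem:joint} and divided by $4\kappa^2$. Your cross-check via $(D_1-D_2)^2\le 2D_1^2+2D_2^2$ also correctly shows that Lemma~\ref{lem:joint} is not actually needed.

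Your side check on the constants is worth keeping, because it exposes something the paper glosses over. For $k=1$ and $\kappa\ge 1/2$ the density of $\rho_{1,i}$ is of order $r^{m-1}$ near $0$, so $\E[\rho_{1,i}^{-2m\kappa}]=\infty$ and $\widehat I_n^{(1+\kappa)}$ is not square-integrable; a restriction such as your $k>2\kappa$ is therefore genuinely required. On the other exponent, however, the limiting second moment of $C_{k,1-\kappa}((n-1)V_m\rho_{k,i}^m)^{\kappa}$ is $C_{k,1-\kappa}^2\,\frac{\Gamma(k+2\kappa)}{\Gamma(k)}\int f^{1-2\kappa}$. The condition you need there is thus $\int f^{1-2\kappa}<\infty$, which $\int f^{1-\kappa}<\infty$ does not imply.
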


\begin{proof}
From \eqref{eq:decomposition},
\begin{equation}
\widehat S_{\kappa,n} - S_\kappa[f] = \frac{1}{2\kappa}\Big[\big(\widehat I_n^{(1-\kappa)} - I^{(1-\kappa)}[f]\big) - \big(\widehat I_n^{(1+\kappa)} - I^{(1+\kappa)}[f]\big)\Big] =: \frac{D_n}{2\kappa}.
\end{equation}
Then
\begin{equation}
\E[D_n^2] = \Var\big(\widehat I_n^{(1-\kappa)}\big) + \Var\big(\widehat I_n^{(1+\kappa)}\big) - 2\Cov\big(\widehat I_n^{(1-\kappa)},\widehat I_n^{(1+\kappa)}\big) + \big(\mathrm{bias}_{1-\kappa} - \mathrm{bias}_{1+\kappa}\big)^2,
\end{equation}
where $\mathrm{bias}_\alpha := \E[\widehat I_n^{(\alpha)}] - I^{(\alpha)}[f] = O(n^{-\gamma/m})$, and this value is determined by Lemma~\ref{lem:marginal}. According to Lemmas $\ref{lem:marginal}$–$\ref{lem:joint}$, both the variance and covariance terms have a complexity of $O(n^{-1})$, and the squared error term has a complexity of $O(n^{-2\gamma/m})$. Therefore, $\E[D_n^2] = O(n^{-2\gamma/m} + n^{-1})$, which is divided by $4\kappa^2$ to obtain the specified ratio.
\end{proof}

\begin{remark}
The rate in Theorem~\ref{thm:consistency} degrades as $\kappa\to0$ through the $1/\kappa^2$ prefactor; this is a distinct phenomenon from the bias growth observed empirically in Section~\ref{sec:sim-gof-heatmap} as $\kappa$ approaches the variance-existence threshold of Proposition~\ref{prop:variance-threshold} \emph{from below} -- i.e.\ as $\kappa$ grows, not shrinks. That empirical growth is not accounted for by the constants appearing in the proof above, and we do not offer a rate for it here; a plausible mechanism is that the H\"older constant $L$ in Assumption~\ref{ass:A2} degrades as the density becomes harder to approximate locally near the boundary, but we have not made this precise, and flag it as an open question. Separately, whether the bias terms of the two power-sum estimators partially cancel, rather than combining via the crude triangle bound used above, merits further investigation; a sharper rate for the $\kappa\to0$ behaviour may follow from a joint second-order expansion of the two moment functionals around $\alpha=1$.
\end{remark}

\begin{corollary}[$\kappa\to0$ limit]
\label{cor:limit}
As $\kappa\to0^+$, $\widehat S_{\kappa,n}\to\widehat H_n$, the Kozachenko--Leonenko Shannon entropy estimator \citep{kozachenko1987}, and $S_\kappa[f]\to H[f]=-\int f\log f$, recovering the $L^2$-consistency result of \citet{cadirci2022entropy} as a limiting special case.
\end{corollary}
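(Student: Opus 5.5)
The plan has two parts: the population limit, which is a pointwise statement about the integrand, and the estimator limit, which is an explicit computation with the correction constants $C_{k,1\mp\kappa}$.

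\emph{Population limit.} By \eqref{eq:decomposition}, $S_\kappa[f]=\int f\,\frac{f^{-\kappa}-f^{\kappa}}{2\kappa}$. For each $x$ with $f(x)>0$ we have $\frac{f^{-\kappa}-f^{\kappa}}{2\kappa}=-\frac{\sinh(\kappa\log f)}{\kappa}\to-\log f$. To pass the limit under the integral, use $|\sinh(\kappa t)/\kappa|\le |t|\cosh(\kappa t)$, so the integrand is dominated by $|\log f|\,(f^{1-\kappa_0}+f^{1+\kappa_0})$ for all $\kappa\le\kappa_0$. This dominating function is integrable if $\int f^{1\pm\kappa_0'}<\infty$ for some $\kappa_0'>\kappa_0$, which Assumption~\ref{ass:A1} supplies for small exponents: $|\log f|\,f^{\kappa_0}\lesssim f^{\kappa_0'}$ near $f=0$ and is bounded where $f$ is bounded. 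Dominated convergence then gives $S_\kappa[f]\to H[f]$.

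\emph{Estimator limit.} Fix a sample and write $u_i=(n-1)V_m\rho_{k,i}^m>0$. Then
\[
\widehat S_{\kappa,n}=\frac1n\sum_i\frac{C_{k,1-\kappa}u_i^{\kappa}-C_{k,1+\kappa}u_i^{-\kappa}}{2\kappa},
\qquad C_{k,1\mp\kappa}=\frac{\Gamma(k)}{\Gamma(k\pm\kappa)}.
\]
Expand to first order in $\kappa$: $\Gamma(k)/\Gamma(k\pm\kappa)=1\mp\kappa\psi(k)+O(\kappa^2)$ and $u^{\pm\kappa}=1\pm\kappa\log u+O(\kappa^2)$. The numerator becomes $2\kappa(\log u_i-\psi(k))+O(\kappa^2)$, so each summand tends to $\log u_i-\psi(k)$. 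Averaging over $i$ gives
\[
\widehat S_{\kappa,n}\to\frac1n\sum_i\log\big((n-1)V_m\rho_{k,i}^m\big)-\psi(k)=\widehat H_n,
\]
which is exactly the Kozachenko--Leonenko estimator in the form used by \citet{leonenko2008,cadirci2022entropy}. This convergence is pointwise for every realization, since only finitely many positive $u_i$ are involved; ties of probability zero are excluded.

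\emph{Recovering the $L^2$ result.} This is the main obstacle. The bound in Theorem~\ref{thm:consistency} carries a $1/\kappa^2$ prefactor, so letting $\kappa\to0$ inside it is useless. I would not take the limit of that rate. Instead I would note that the limiting object $\widehat H_n$ is the estimator whose $L^2$-consistency is proven in \citet{cadirci2022entropy}, so the corollary is consistent with, and recovers as its $\kappa=0$ endpoint, that result.

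For a genuine uniform statement, rewrite the difference as
\[
\frac{\widehat I^{(1-\kappa)}-\widehat I^{(1+\kappa)}}{2\kappa}=\frac1n\sum_i\int_{-1}^{1}\tfrac12\,\partial_\alpha\Big[C_{k,1+s\kappa}\,u_i^{-s\kappa}\Big]_{\alpha=s\kappa}\,ds.
\]
The integrand is $\tfrac12\big(-\psi\text{-type term}-\log u_i\big)$ times quantities bounded uniformly in small $\kappa$. Controlling its second moment by the moments of $\rho_{k,i}^{\pm m\kappa_0}|\log\rho_{k,i}|$ available from Lemma~\ref{lem:lps} would remove the $1/\kappa^2$ factor. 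I flag this step as the delicate one and would present the corollary primarily as the pointwise identification of the two limits.
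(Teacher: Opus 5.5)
Your core step is the paper's own: expand $C_{k,1\mp\kappa}$ and $u_i^{\pm\kappa}$ to first order in $\kappa$, then pass to the limit term by term in the finite sum.

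On the one point where you and the paper disagree, you are right. The paper writes $C_{k,\alpha}=1+(1-\alpha)\psi(k)+o(1-\alpha)$. This contradicts its own correct remark that $\frac{d}{d\alpha}\log C_{k,\alpha}\big|_{\alpha=1}=\psi(k)$, so the paper lands on $\frac1n\sum_i\log u_i+\psi(k)$. Your expansion $\Gamma(k)/\Gamma(k\pm\kappa)=1\mp\kappa\psi(k)+O(\kappa^2)$ gives $\frac1n\sum_i\log u_i-\psi(k)$, which is the genuine Kozachenko--Leonenko estimator (for $k=1$, $-\psi(1)$ is Euler's constant).

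You also supply two things the paper's proof omits. First, you give a dominated-convergence argument for $S_\kappa[f]\to H[f]$, which the paper only asserts. Second, you correctly observe that the $L^2$ statement does not follow by sending $\kappa\to0$ in Theorem~\ref{thm:consistency}. The $1/\kappa^2$ prefactor blocks this, and so does the unjustified interchange of the limits in $n$ and $\kappa$. The paper's proof, like yours, establishes only the pointwise identification of the limit. Reading the $L^2$ part as inherited from \citet{cadirci2022entropy} is therefore the accurate interpretation.

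Two small repairs are needed. In the domination step, the bound you need near $f=0$ is $|\log f|\,f^{1-\kappa_0}\lesssim f^{1-\kappa_0'}$ with $\kappa_0'>\kappa_0$. As written, $|\log f|\,f^{\kappa_0}\lesssim f^{\kappa_0'}$ goes the wrong way. This step also uses Assumption~\ref{ass:A1} at a fixed $\kappa_0'>0$, not merely at the $\kappa$ being sent to zero.

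Your sketched integral representation needs an overall minus sign. With $g_i(t)=C_{k,1+t}u_i^{-t}$, the summand equals $-\tfrac12\int_{-1}^{1}g_i'(s\kappa)\,ds=\tfrac12\int_{-1}^{1}g_i(s\kappa)\big(\log u_i-\psi(k-s\kappa)\big)\,ds$. At $\kappa=0$ this reproduces your pointwise limit $\log u_i-\psi(k)$.
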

\begin{proof}
Let $\alpha = 1\mp\kappa$; thus, as $\kappa \to 0$, $1 - \alpha = \pm\kappa \to 0$. The function $C_{k,\alpha} = \Gamma(k)/\Gamma(k+1-\alpha)$ is regular with respect to $\alpha$ in the vicinity of $\alpha = 1$, and $C_{k,1} = 1$ and $\frac{d}{d\alpha}\ log C_{k,\alpha}\big|_{\alpha=1} = \psi(k)$ (digamma function; from $\Gamma'(k)/\Gamma(k)=\psi(k)$), we obtain the first-order Taylor series expansion $C_{k,\alpha} = 1 + (1-\alpha)\psi(k) + o(1-\alpha)$. In a similar way, $[(n-1)V_m\rho_{k,i}^m]^{1-\alpha} = 1 + (1-\alpha)\log[(n-1)V_m\rho_{k,i}^m] + o(1-\alpha)$. When we multiply these and retain the first-order terms in the expression $(1-\alpha) = \pm\kappa$.
\begin{equation}
Y_i - Z_i = 2\kappa\Big[\log\big((n-1)V_m\rho_{k,i}^m\big) + \psi(k)\Big] + o(\kappa).
\end{equation}
The result is that $\widehat{S}_{\kappa,n} = (\widehat{I}_n^{(1-\kappa)} - \widehat{I}_n^{(1+\kappa)})/(2\kappa) = \frac{1}{2\kappa n}\sum_i (Y_i - Z_i) \to \frac{1}{n}\sum_i\big[\log((n-1)V_m\ rho_{k,i}^m) + \psi(k)\big] $ when $\kappa \to 0$, and this is exactly the Kozachenko–Leonenko Shannon entropy estimator (taking into account the general sign rule; since $S_\kappa \to H = -\int f \log f$, while the sum shown consists of averaged $+\log$ terms, which matches $\widehat{H}_n$ as traditionally described using the digamma correction).
\end{proof}

\section{Asymptotic Normality}
\label{sec:clt}
\begin{theorem}[Bivariate CLT and asymptotic normality of $\widehat S_{\kappa,n}$]
\label{thm:clt}
Under assumptions $\ref{ass:A1}$–$\ref{ass:A5}$,
\begin{equation}
\sqrt {n} \Big[\big(\widehat{I}_n^{(1-\kappa)}, \widehat{I}_n^{(1+\kappa)}\big) - \big(I^{(1-\kappa)}[f], I^{(1+\kappa)}[f]\big)\Big] \xrightarrow{d} \mathcal{N}_2(0, \Gamma),
\end {equation}
here $\Gamma_{\alpha\alpha'} = \lim_{n\to\infty} n\,\Cov(\widehat I_n^{(\alpha)},\widehat I_n^{(\alpha')})$ and $\alpha,\alpha' \in \{1-\kappa,1+\kappa\}$. By applying the delta method to $\phi(a,b)=(a-b)/(2\kappa)$, we obtain the following:
\begin {equation}
\sqrt{n}\,\big(\widehat{S}_{\kappa,n} - S_{\kappa}[f]\big) \xrightarrow{d} \mathcal{N}(0,\sigma_{\kappa}^2),
\qquad \sigma_{\kappa}^2 = \frac{\Gamma_{11} - 2\Gamma_{12} + \Gamma_{22}}{4\kappa^2}.
\end{equation}
\end{theorem}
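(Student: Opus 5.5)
The approach is the Cramér--Wold device combined with a CLT for stabilizing functionals of Poisson/binomial point processes, in the style of \citet{penrose2001,penrose2003}; the delta-method statement then follows. For a fixed direction $(t_1,t_2)\in\R^2$, I will study the scalar statistic
\begin{equation*}
T_n := t_1\widehat I_n^{(1-\kappa)} + t_2\widehat I_n^{(1+\kappa)} = \frac1n\sum_{i=1}^n \xi_n(X_i;\mathcal X_n),
\end{equation*}
where
\begin{equation*}
\xi_n(x;\mathcal X) = t_1 C_{k,1-\kappa}\big((n-1)V_m\rho_k(x,\mathcal X)^m\big)^{\kappa} + t_2 C_{k,1+\kappa}\big((n-1)V_m\rho_k(x,\mathcal X)^m\big)^{-\kappa}.
\end{equation*}
This score depends on $(x,\mathcal X)$ only through the $k$-NN distance $\rho_k(x,\mathcal X)$. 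After rescaling by $n^{1/m}$, it is therefore a function of $n^{1/m}\rho_k$, which is exactly the setting of the $k$-NN examples in Penrose--Yukich. The plan is to show that $\sqrt n\,(T_n-\E T_n)\xrightarrow{d}\mathcal N(0,t^T\Gamma t)$, to control the centering separately, and then to invoke Cramér--Wold.

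\textbf{Step 1: stabilization.} The score $\xi$ is exponentially stabilizing, uniformly in $x$ on compact subsets of $\supp f$. Its radius of stabilization is at most a constant multiple of $\rho_k$, by the bounded-degree/cone argument of \citet{redmond1996,penrose2003}, and the tails of $n^{1/m}\rho_k$ decay exponentially wherever $f$ is bounded below, which is guaranteed by Assumption~\ref{ass:A1}.

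\textbf{Step 2: moments.} I will verify the $(2+\delta)$-moment condition $\sup_n\E|\xi_n(X_1;\mathcal X_n)|^{2+\delta}<\infty$. The positive power $\rho^{m\kappa}$ is handled by Assumption~\ref{ass:A5}, since $\kappa(2+\delta)\le 2+\delta$. The negative power $\rho^{-m\kappa}$ is controlled by the Binomial small-ball law of $N_i(r)$ from Section~\ref{sec:nn}, because $f$ is bounded, which is Assumption~\ref{ass:A1}. Specifically, $\Pr((n-1)V_m\rho^m\le s)\lesssim s^k$, so $\E\rho^{-m\kappa(2+\delta)}$ is finite once $\kappa(2+\delta)<k$. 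For $k\ge1$ and $\kappa<1$, a small enough $\delta$ achieves this.

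\textbf{Step 3: CLT and variance limit.} With Steps 1--2 in hand, the Penrose--Yukich de-Poissonized CLT for binomial input gives asymptotic normality of $\sqrt n(T_n-\E T_n)$. It also gives the existence of $\lim n\Var(T_n)$, expressed by their integral formula involving $f$, and this limit equals $t^T\Gamma t$. As a consequence, the limits defining $\Gamma_{\alpha\alpha'}$ exist, with finiteness already guaranteed by Lemmas~\ref{lem:marginal}--\ref{lem:joint}. Nondegeneracy is not required, since a zero-variance limit yields a degenerate normal.

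\textbf{Step 4: centering and conclusion.} By Lemma~\ref{lem:marginal}, $\sqrt n(\E T_n - t_1I^{(1-\kappa)}-t_2I^{(1+\kappa)})=O(n^{1/2-\gamma/m})$. This is the main obstacle: the term vanishes only if $\gamma/m>1/2$, i.e.\ $m<2\gamma\le2$, so essentially only for $m=1$. My plan is to state the bivariate CLT centered at $\E\widehat I_n^{(\alpha)}$ in general, and to recover centering at $I^{(\alpha)}[f]$ either under the extra condition $m<2\gamma$ or via a sharper bias bound using the $C^2$ part of Assumption~\ref{ass:A2}. In the latter route, the first-order term integrates out by symmetry of balls, leaving $O(n^{-2/m})$, which is $o(n^{-1/2})$ for $m\le3$. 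I would flag this restriction explicitly rather than hide it. Finally, Cramér--Wold yields the bivariate limit $\mathcal N_2(0,\Gamma)$. Because $\phi(a,b)=(a-b)/(2\kappa)$ is linear, the delta method is exact, and the variance is $\nabla\phi^T\Gamma\nabla\phi=(\Gamma_{11}-2\Gamma_{12}+\Gamma_{22})/(4\kappa^2)$.
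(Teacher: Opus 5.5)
Your architecture is the same as the paper's: Cram\'er--Wold on $aY_i+bZ_i$, a Penrose--Yukich CLT for the stabilizing $k$-NN score, and the exact delta method for the linear $\phi$. Your Step~4 is more careful than the paper. Its proof only gives centering at $\E W_n$ and silently drops $\sqrt n\cdot O(n^{-\gamma/m})$, which does not vanish for $m\ge 2$. (Your $O(n^{-2/m})$ refinement would also have to control the region where $f$ is small and the local expansion fails.)

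The genuine gap is Step~2, and it cannot be closed under Assumptions~\ref{ass:A1}--\ref{ass:A5}, because in the regimes where it fails the conclusion fails too. For the negative power you correctly derive the requirement $\kappa(2+\delta)<k$. But this cannot be met when $k=1$ and $\kappa\ge 1/2$. Assumptions~\ref{ass:A3}--\ref{ass:A4} allow this for general $f$, and Assumption~\ref{ass:A1} even allows it for $f_\kappa$ with $m=1$, since $2/(m+2)=2/3$. In that case, conditionally on $X_i$ and for every $n$, the variable $(n-1)V_m\rho_{1,i}^m$ has density close to $f(X_i)>0$ near $0$, so $\E Z_i^2=\infty$. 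Since $Z_i\ge 0$, this gives $\Var(\widehat I_n^{(1+\kappa)})=\infty$ and hence $\Gamma_{22}=\infty$.

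The positive power fails as well. Assumption~\ref{ass:A5} bounds moments of the \emph{unscaled} $\rho_{k,1}$. These tend to zero and give no uniform bound on $\E[((n-1)V_m\rho_{k,1}^m)^{\kappa(2+\delta)}]$. A scaled reading of A5 would instead force $\int f^{-1-\delta}<\infty$, which is impossible on unbounded support. By the Gamma$(k,1)$ limit and Fatou,
\[
\liminf_n\E Y_1^2\ge C_{k,1-\kappa}^2\,\frac{\Gamma(k+2\kappa)}{\Gamma(k)}\int f^{1-2\kappa},
\]
and $\int f^{1-\kappa}<\infty$ does not make the right-hand side finite.

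Under the null this bites inside the paper's own range. In the tail $f_\kappa^{-\kappa}\asymp\|x\|^2$, so $Y_i$ behaves like $c\,G_i^{\kappa}\|X_i\|^2$ with $G_i\approx\mathrm{Gamma}(k,1)$. Finiteness of $I^{(1-\kappa)}[f_\kappa]$ is exactly $\E\|X\|^2<\infty$, which is the paper's threshold. But $\E Y_i^2<\infty$ needs $\E\|X\|^4<\infty$, i.e.\ $\kappa<2/(m+4)$. For $\kappa\in[2/(m+4),2/(m+2))$ the summands have tail index in $(1,2]$ and $\Gamma_{11}=\infty$. Compare the jump at $m=3$, $\kappa=0.3>2/7$ in Table~\ref{tab:asymptotic-constants}.

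Citing Lemmas~\ref{lem:marginal}--\ref{lem:joint} in Step~3 does not rescue this, because the $\Var=O(1)$ claim of Lemma~\ref{lem:lps} needs the same integrability. The same tail region is also what your Step~1 (stabilization uniform only on compacts) leaves uncontrolled. The repair belongs in the hypotheses: replace Assumption~\ref{ass:A5} by, e.g., $k>\kappa(2+\delta)$ and $\int f^{1-\kappa(2+\delta)}<\infty$. For $f_\kappa$ this means $\kappa<2/(m+4+2\delta)$.
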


\begin{proof}
Fix $a,b\in\R$ and consider $W_n := a\widehat I_n^{(1-\kappa)}+b\widehat I_n^{(1+\kappa)} = \frac1n\sum_i (aY_i+bZ_i)$. Each summand $aY_i+bZ_i$ is a function of $\rho_{k,i}$ alone, and $\rho_{k,i}$ is determined by the local configuration of points within the $k$-NN neighbourhood of $X_i$ -- i.e.\ $W_n$ is (a normalized sum of) a \emph{stabilizing} functional of the $k$-NN graph on $\{X_1,\dots,X_n\}$ in the sense of \citet{penrose2001,penrose2003}: changing points far from $X_i$ does not change $\rho_{k,i}$. Given the assumptions ~\ref{ass:A1}--\ref{ass:A2}, \ref{ass:A5}, we have the general central limit theorem for stabilizing Euclidean NN-functionals \citep [Thm.~2.1] {penrose2003} holds for $W_n$, which in turn implies that for some $\sigma_{a,b}^2 = a^2\Gamma_{11}+2ab\Gamma_{12}+b^2\Gamma_{22}\geq0$, $\sqrt n (W_n - \E W_n) \xrightarrow{d} \mathcal N(0,\sigma_{a,b}^2)$; here, $\Gamma_{\alpha\alpha'}$ denotes the corresponding asymptotic (co)variance ratio determined using Lemma~\ref{lem:joint}. Because $a$ and $b$ have arbitrary values, the Cramér–Wold approach provides a joint bivariate CLT. Because $\phi(a,b) = (a - b)/(2\kappa)$ is linear (and thus, for $\kappa \neq 0$, $\nabla\phi = (1, -1)/(2\kappa) \neq 0$), which gives $\sigma_\kappa^2 = \nabla\phi^T\Gamma\nabla\phi = (\Gamma_{11}-2\Gamma_{12}+\Gamma_{22})/(4\kappa^2)$.
\end{proof}

\begin{remark}
Unlike the case of the single-exponent R’en, where the marginal variance is determined solely by $\Gamma_{11}$, here $\Gamma_{12}$ generally does not have a simple closed-form expression. Practically, as in the variance estimate in \citet{cadirci2025tsallis}, $\sigma_\kappa^2$ can be estimated via bootstrap or subsampling; The normality checks in Section~\ref{sec:sim-normality} empirically confirm this estimate by comparing it to the finite-sample distribution of $\widehat S_{\kappa,n}$. In order to verify the $\sigma_\kappa^2$ formula term by term, however, we did not decompose the estimate into $\Gamma_{11}$, $\Gamma_{12}$, and $\Gamma_{22}$ separately; doing so would be a natural check for future work.
\end{remark}

\section{Goodness-of-Fit Test}
\label{sec:test}

We consider testing $H_0:\kappa=\kappa_0$ within the $\kappa$-Gaussian family. Two cases are relevant, and we treat them separately to keep the test statistic's definition matched to what is actually calibrated in Section~\ref{sec:simulation}.

\paragraph{Simple null (location and scale known).} This is the case studied throughout Section~\ref{sec:simulation}, where $\mu$ and $\Sigma$ are treated as known (in the simulations, $\mu=0$, $\Sigma=I_m$).Let us define:
\begin{equation}
\label{eq:Tn-simple}
T_n = \widehat{S}_{\kappa_0,n} - S_{\kappa_0}\big[f_{\kappa_0,\mu,\Sigma}\big],
\end{equation}
where $S_{\kappa_0}[f_{\kappa_0,\mu,\Sigma}]$ can be computed in closed form using \eqref{eq:closed-form-entropy}. In Theorem~\ref{thm:clt}, under $H_0$, it follows that $\sqrt n\,T_n \xrightarrow{d}\mathcal N(0,\sigma_{\kappa_0}^2)$. From section \ref{sec:sim-power-within}, the mapping $\kappa\mapsto S_\kappa [f_\kappa]$ provides a much stronger response to alternatives where $\kappa_1>\kappa_0$ than to those where $\kappa_1<\kappa_0$, at the same distance $|\kappa_1-\kappa_0|$; therefore, a two-sided test based on $|T_n|$ wastes most of its rejection probability in a direction where the statistic is almost entirely insensitive. For this reason, we use a one-sided, lower-tail rejection rule.
\begin{equation}
\label{eq:rejection-rule}
\text{reject } H_0 \text{ at level } \alpha \quad\text{when}\quad T_n \le c_N(\alpha),
\end{equation}
where $c_N(\alpha)$ is the $\alpha$-quantile of the null distribution of $T_n$, obtained either by direct Monte Carlo simulation at the sample size of interest (Table~\ref{tab:critical-values}) or, for large $N$, via the asymptotic approximation $c_N(\alpha)\approx \mu_{m,\kappa_0,k}/\sqrt N$ with $\mu_{m,\kappa_0,k}=\sigma_{\kappa_0}z_\alpha$ (Table~\ref{tab:asymptotic-constants}). This is the test whose size, power, and critical values are studied in Sections~\ref{sec:sim-critical}--\ref{sec:sim-power-cross}.

\paragraph{Composite null (location and scale estimated).} In the case where $\mu$ and $\Sigma$ are unknown, by a natural generalization, we replace them with the moments or maximum likelihood estimators $\hat\mu$ and $\hat\Sigma$ from formula \eqref{eq:Tn-simple}; which yields $T_n = \widehat{S}_{\kappa_0,n} - S_{\kappa_0}[f_{\kappa_0,\hat\mu,\hat\Sigma}]$. Estimating $\mu,\Sigma$ contributes an additional term to the asymptotic variance in Theorem~\ref{thm:clt}, analogous to the plug-in/sandwich corrections used for the Shannon- and Tsallis-entropy tests in \citet{cadirci2022entropy} and \citet{cadirci2025tsallis}. We have not derived this correction explicitly for the $\kappa$-Gaussian case, nor validated it numerically, and the critical values reported in Section~\ref{sec:sim-critical} apply to the simple-null test \eqref{eq:Tn-simple}--\eqref{eq:rejection-rule} only; extending them to the composite case is left for future work.

\section{Simulation Results}
\label{sec:simulation}

We now turn to the finite-sample behaviour of $T_{N,k}(m,\kappa)$, using the simple-null statistic \eqref{eq:Tn-simple} and one-sided rejection rule \eqref{eq:rejection-rule} of Section~\ref{sec:test} throughout. $\kappa$ is kept below the dimension-specific threshold $2/(m+2)$ of Proposition~\ref{prop:variance-threshold}; where a comparison across dimensions is needed we express $\kappa$ as a fixed fraction of this threshold rather than a fixed absolute value, since a given $\kappa$ represents a very different point in the tail relative to $m=1$ than it does relative to $m=3$. The remainder of this section addresses, in turn: what the distribution itself looks like, whether the estimator behaves as the asymptotic theory predicts as $N$ grows, whether it is approximately normal at finite $N$, what its critical values look like in practice, how the rejection region behaves across the full parameter grid, and finally how much power the resulting test has, both within the $\kappa$-Gaussian family and against a genuinely different one.

\subsection{Distributional behaviour}
\label{sec:sim-distribution}

Before looking at the test itself, it is worth confirming that the sampler and the tail behaviour derived in Section~\ref{sec:distribution} agree with what a finite sample actually looks like. Figure~\ref{fig:scatter-contours} shows two-dimensional draws from $\kappa GG(2,\kappa)$ together with kernel density contours, for $\kappa$ ranging from 0.05 up to 85\% of the variance threshold $2/(2+2)=0.5$. The point cloud visibly thickens in the tails as $\kappa$ increases. The reported empirical variance stays close to 1 until the last panel, where finite-sample estimation of the variance becomes noticeably harder as $\kappa$ approaches the boundary -- an early, purely descriptive echo of the bias phenomenon quantified later in Section~\ref{sec:sim-gof-heatmap}.

\begin{figure}[H]
\centering
\includegraphics[width=\textwidth]{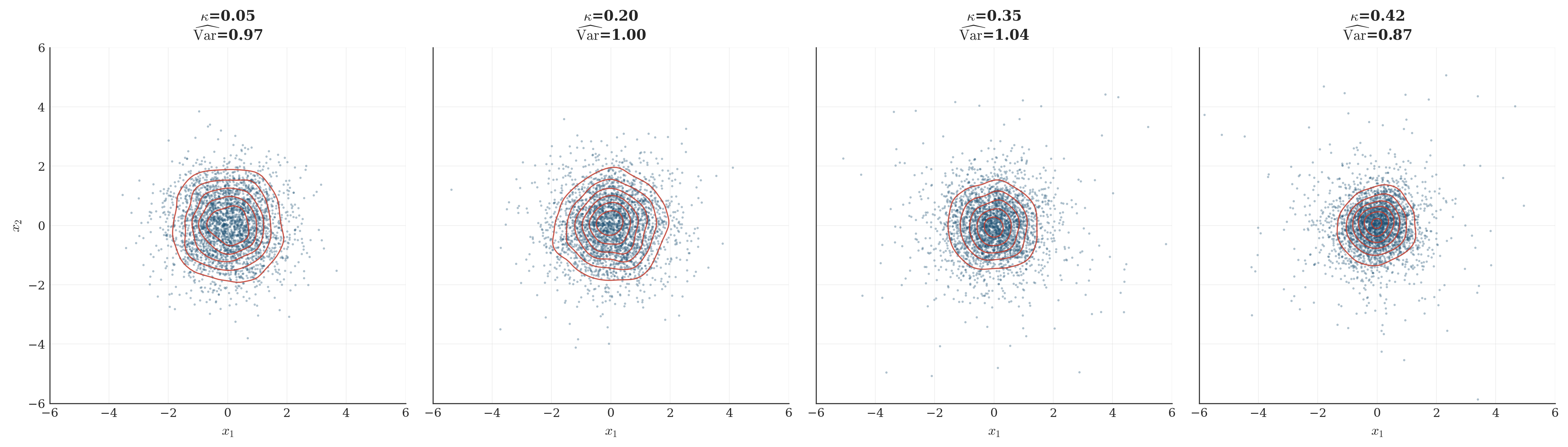}
\caption{Samples from the standardized isotropic $\kappa GG(2,\kappa)$ distribution ($N=3000$) with kernel density contours overlaid, for four values of $\kappa$ approaching the variance-existence threshold $2/(m+2)=0.5$. The reported variance is the sample variance of the plotted points.}
\label{fig:scatter-contours}
\end{figure}

Figure~\ref{fig:density-comparison} looks at the univariate marginal directly, comparing the density against $N(0,1)$ on both a linear and a log scale, with the shaded region showing the 5th-95th percentile range of a kernel density estimate across 12 independent samples of size $N=15{,}000$ rather than a single noisy draw. The linear-scale panel shows the expected effect near the mode -- larger $\kappa$ produces a flatter, wider peak -- while the log-scale panel makes the polynomial tail derived in Section~\ref{sec:distribution} visible directly: the $\kappa$-Gaussian curves sit above the Gaussian reference line once $|x|$ is large enough, and the gap widens with $\kappa$, exactly as the tail index $-2/\kappa$ predicts.

\begin{figure}[H]
\centering
\includegraphics[width=\textwidth]{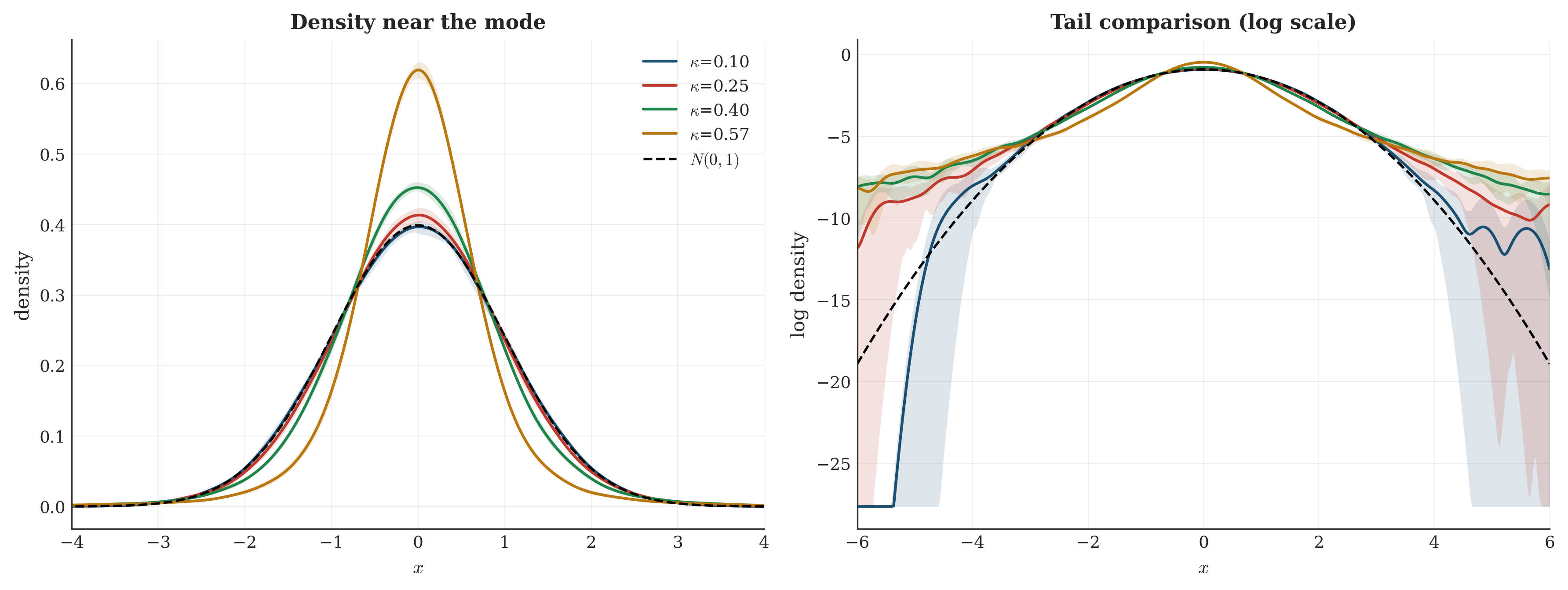}
\caption{Empirical density of the standardized univariate $\kappa GG(1,\kappa)$ against $N(0,1)$, on a linear scale (left) and a log scale (right). Solid lines are the mean kernel density estimate over 12 replications of $N=15{,}000$; shaded bands are the 5th-95th percentile range across replications.}
\label{fig:density-comparison}
\end{figure}

\subsection{Consistency}
\label{sec:sim-consistency}

Figure~\ref{fig:consistency-grid} shows the sample mean of $T_{N,k}(m,\kappa)$ over $M=100$ replications as $N$ runs from 500 to 5000, for $k=1$ and four values of $\kappa$ (each expressed as a fraction of $2/(m+2)$, so that the four rows correspond to a comparable relative distance from the variance-existence boundary in every column). The shaded region is $\pm1$ empirical standard deviation across replications and the error bars are $\pm1$ standard error of the plotted mean. In every panel the bias is largest at $N=500$ and decreases steadily, crossing into a narrow band around zero well before $N=5000$; the rate at which it does so is visibly slower in the bottom row, where $\kappa$ sits closer to the threshold.

\begin{figure}[H]
\centering
\includegraphics[width=\textwidth]{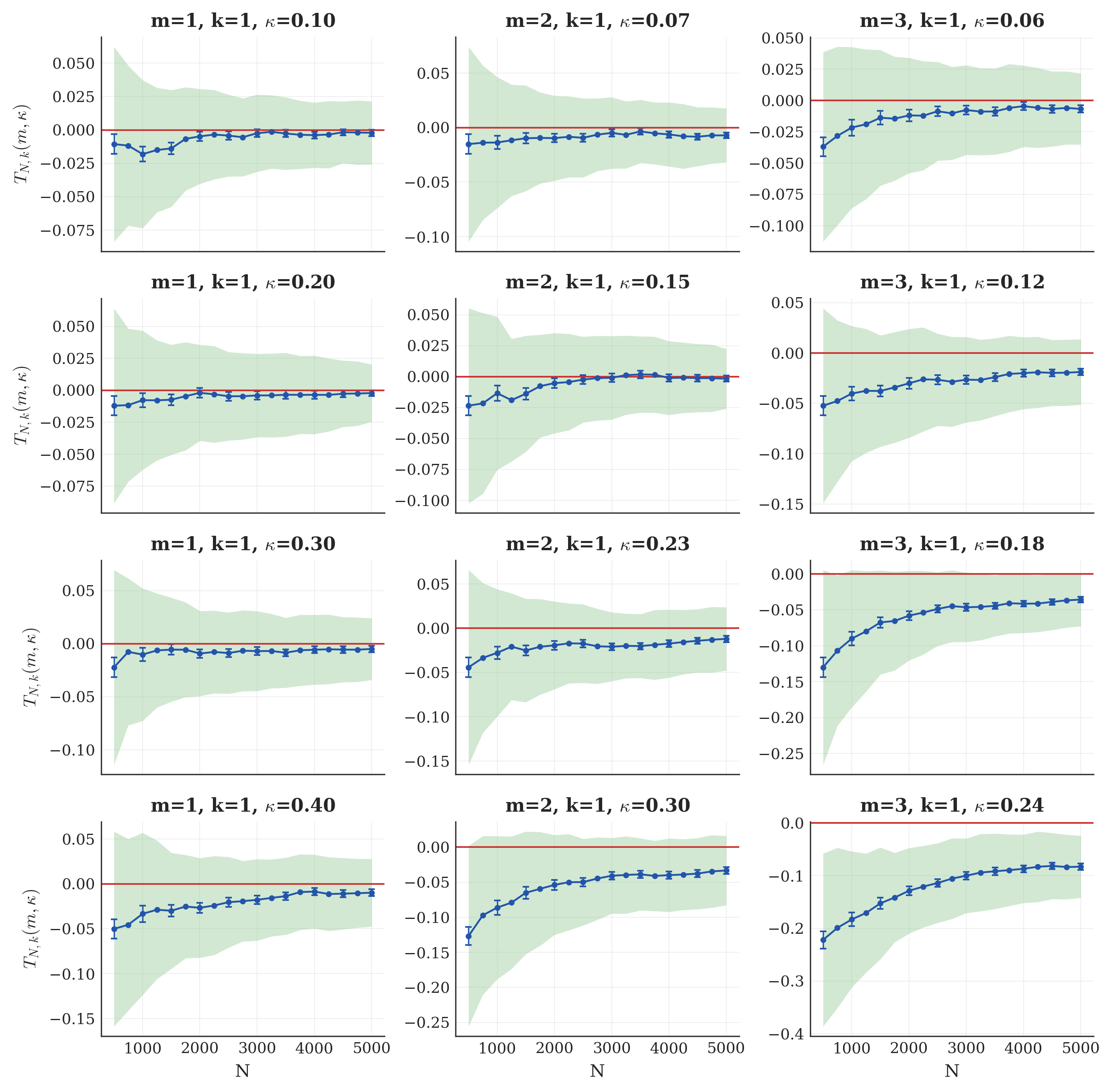}
\caption{Convergence of $T_{N,k}(m,\kappa)$ to zero as $N$ increases for $k=1$. The rows correspond to fixed $\kappa$ values at 15\%, 30\%, 45\%, and 60\% of the variance-to-variance threshold $2/(m+2)$; the columns represent values of $m=1,2,3$. Shaded band: $\pm1$ SD over $M=100$ repetitions; error bars: $\pm1$ SE of the mean.}
\label{fig:consistency-grid}
\end{figure}

The results in Figure~\ref{fig:consistency-grid} are all for $k=1$; Figure~\ref{fig:consistency-kcompare} checks whether the same convergence holds for $k=2$ and $k=3$, at a single representative $\kappa$ (35\% of the variance-existence threshold, common to all three dimensions so that the comparison across $m$ is on equal footing). In all the plots, this trend remains consistent: in each panel, the mean of $T_{N,k}$ starts from a value nonzero at $N=500$ and converges to a narrow range around zero at $N=5000$; however, this process occurs at a rate that is not visibly dependent on $k$. What does change with $k$ is the width of the shaded band, which narrows slightly moving from $k=1$ to $k=3$ -- consistent with $k$ entering the variance term of Lemma~\ref{lem:marginal} only through a bounded constant rather than through the $O(n^{-1})$ rate itself. Together with Figure~\ref{fig:consistency-grid}, this indicates that the choice of $k$ mainly affects the variance of the estimator rather than whether or how fast it converges.

\begin{figure}[H]
\centering
\includegraphics[width=\textwidth]{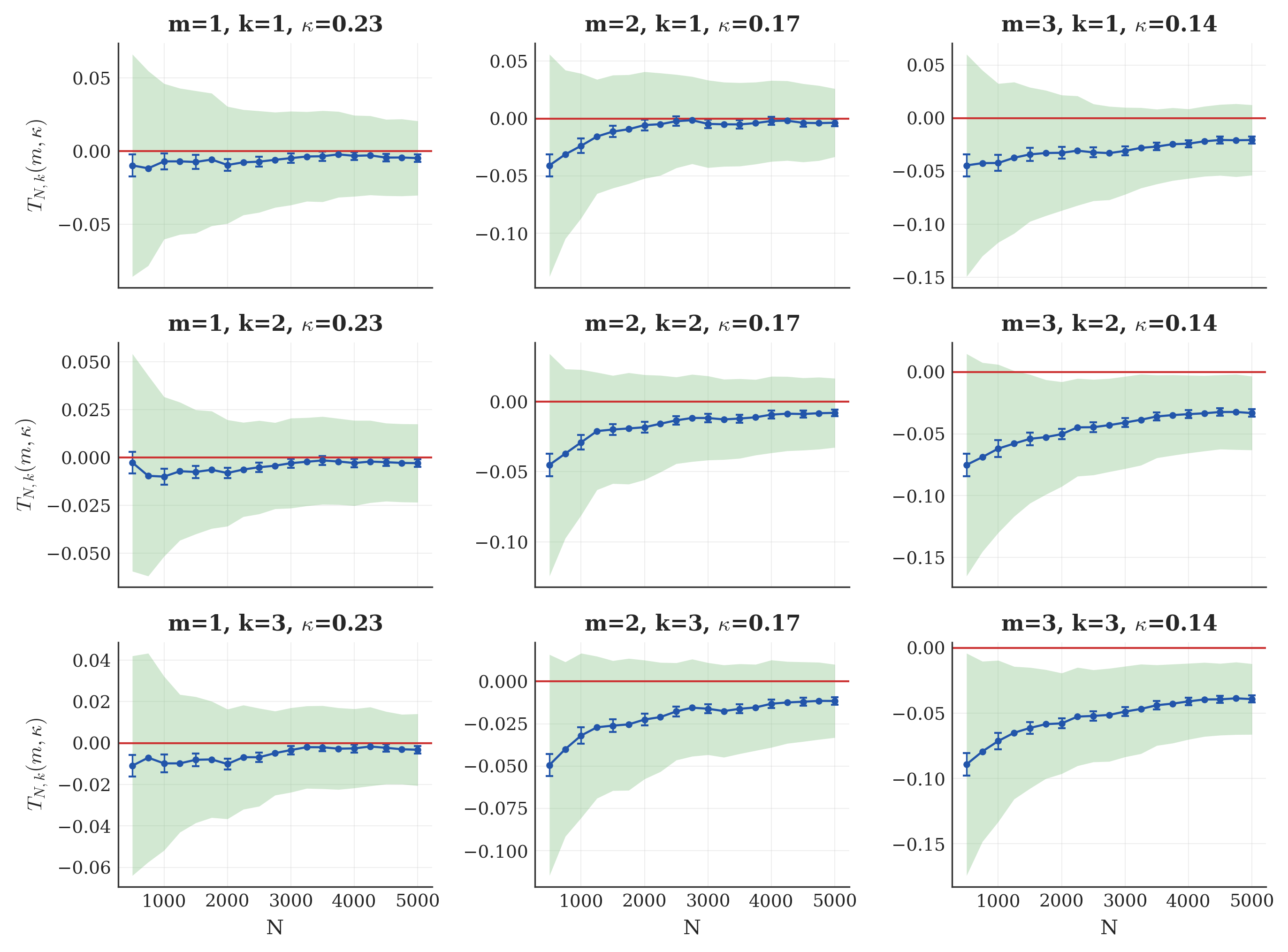}
\caption{$T_{N,k}(m,\kappa)$ converges to zero as $N$ increases; for neighborhood sizes $k=1,2,3$ (rows) and the dimensions $m=1,2,3$ (columns); for each value of $m$, the variance-existence threshold of $\kappa$ has been held constant at 35\% of $2/(m+2)$. Shaded band: $\pm1$ SD over $M=100$ repetitions; error bars: $\pm1$ SE of the mean.}
\label{fig:consistency-kcompare}
\end{figure}

Figures~\ref{fig:consistency-grid} and~\ref{fig:consistency-kcompare} only establish that the bias shrinks; they do not say at what rate. Theorem \ref{thm:consistency} says that
$\mathbb E[(\widehat S_{\kappa,n} -S_\kappa[f])^2] = O(n^{ -2\gamma/m}+n^{-1}).$ When the density is uniform, the $n^{-1}$ piece is the main part. As a result, $\mathrm{RMSE}(T_{N,k})$ goes down like $N^{-1/2}$. On the log log plot, it goes down at a rate of $N^{-1/2}$. Figure~\ref{fig:convergence-rate} checks this directly, for $m=2$, $\kappa=0.25$ and $k=3$, by fitting a straight line to $\log\,\mathrm{RMSE}$ against $\log N$ over 12 sample sizes between 100 and 4000 (80 replications each). The slope from our fit is -0.495. In real terms, that is close to  -1/2. The second plot also looks at bias by itself. There, the 95\% band shifts closer to zero. As $N$ increases, the 95\% band shifts toward the origin, and the 95\% band is not as wide as the 95\% band in the log-log plot.

\begin{figure}[H]
\centering
\includegraphics[width=\textwidth]{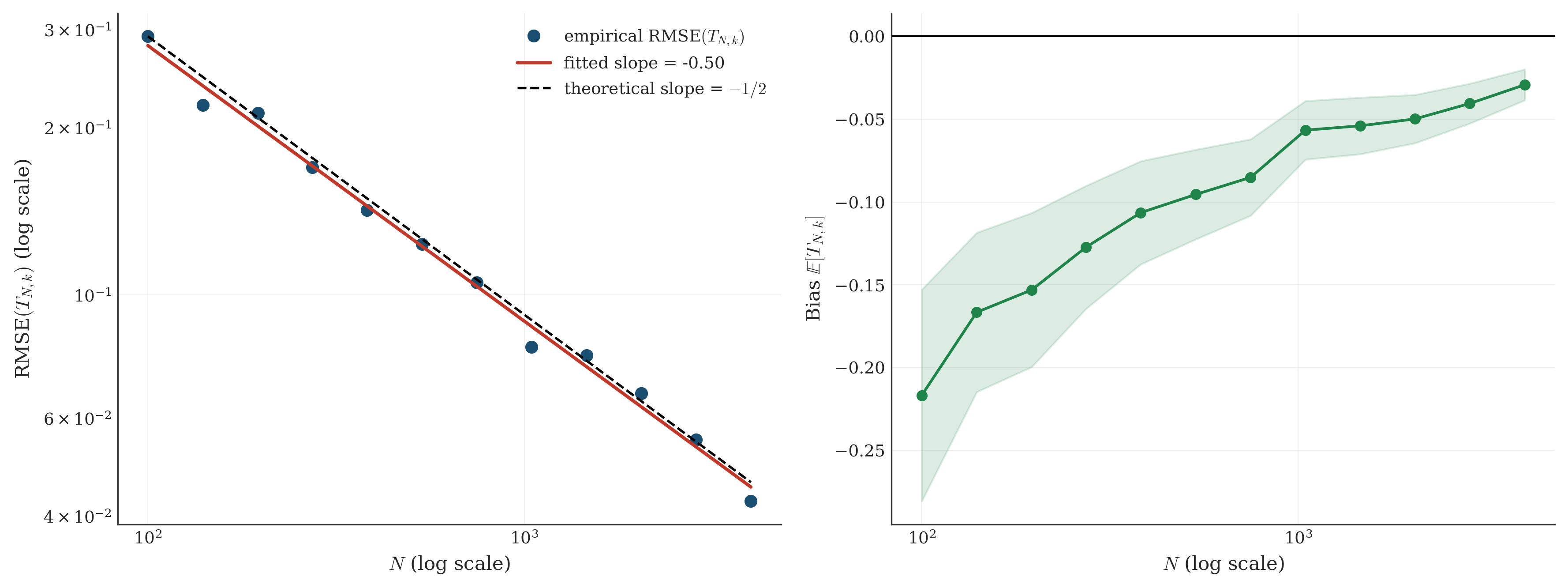}
\caption{Left: $\mathrm{RMSE}(T_{N,k})$ against $N$ on log-log axes, for $m=2$, $\kappa=0.25$, $k=3$, with a fitted power-law slope compared against the theoretical $-1/2$ exponent. Right: bias of $T_{N,k}$ against $N$ with a 95\% confidence band, on a log $N$ axis. Based on 80 replications per sample size.}
\label{fig:convergence-rate}
\end{figure}

One of the free parameters remaining in the estimator is $k$, representing the number of nearest neighbors used to compute $\widehat{S}_{\kappa,n}$; Figure~\ref{fig:consistency-heatmap} illustrates the effect of this parameter, along with $N$, on $\mathrm{RMSE}(T_{N,k})$ for parameters $m=2$ and $\kappa=0.25$, with $k$ ranging from $1$ to $5$, in a single heatmap.  Both axes are important, although not to the same degree: moving along the $N$ axis while maintaining a constant $k$ reduces the RMSE by approximately a factor of five over the range considered, whereas moving along the $k$ axis while maintaining a constant $N$ produces a relatively modest improvement, most of which has already been achieved at $k=2$ or $3$. The fact that the variance term in Lemma~\ref{lem:marginal} is $O(n^{-1})$—regardless of $k$ (for a fixed $k$)—is consistent with this; here, $k$ exerts its influence solely through the constant.

\begin{figure}[H]
\centering
\includegraphics[width=0.85\textwidth]{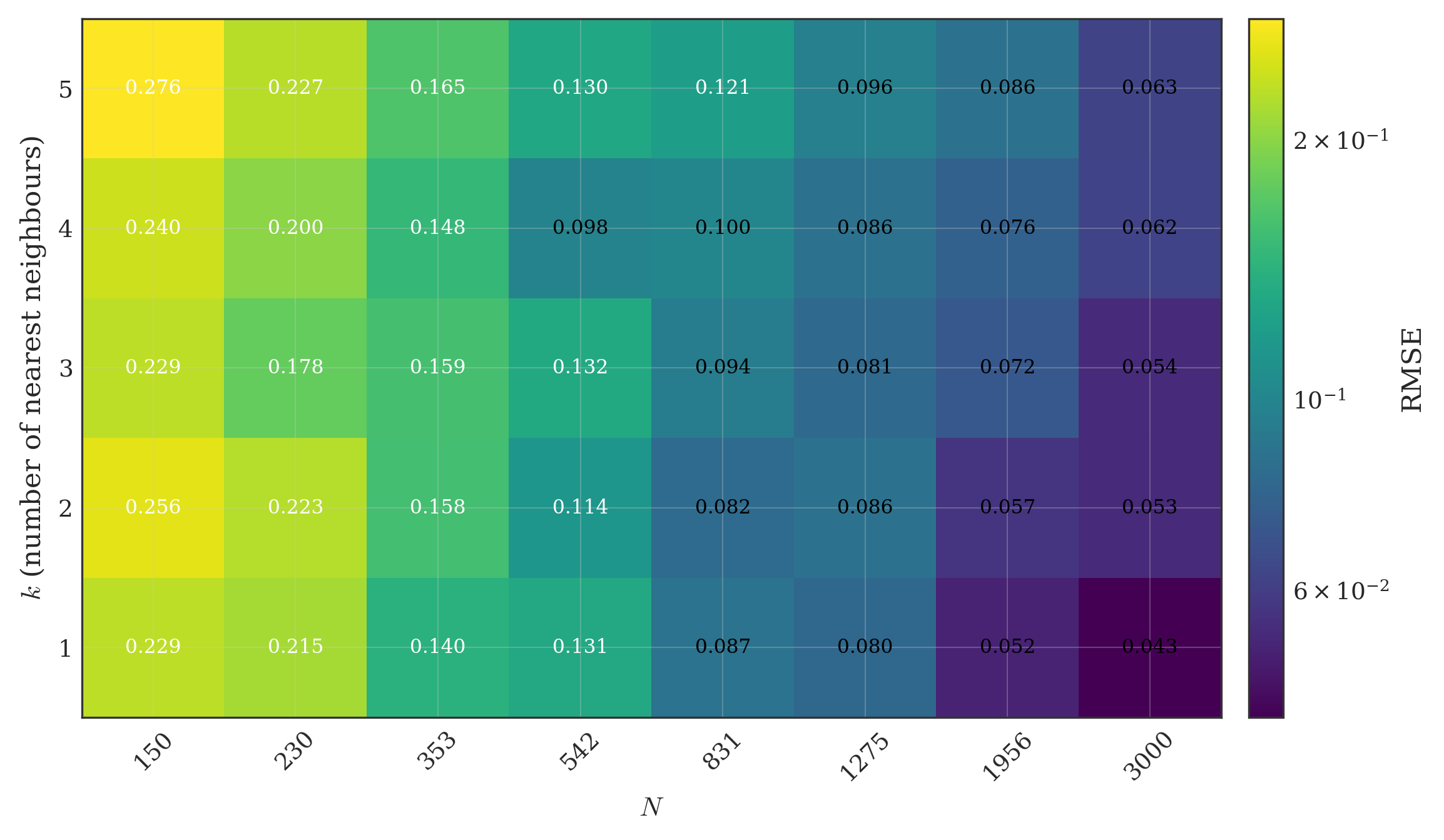}
\caption{$\mathrm{RMSE}(T_{N,k})$ across the joint $(k,N)$ grid, for $m=2$, $\kappa=0.25$, on a logarithmic colour scale. Each cell is based on 40 replications.}
\label{fig:consistency-heatmap}
\end{figure}

\subsection{Asymptotic normality}
\label{sec:sim-normality}

Theorem~\ref{thm:clt} yields $\sqrt n\,T_n\xrightarrow{d}\mathcal N(0,\sigma_{\kappa_0}^2)$; this ensures that the critical values in Section~\ref{sec:sim-critical} are meaningful for a finite number of $N$ values. Figure~\ref{fig:normality} verifies this simultaneously from three perspectives for the values $m=2$, $\kappa=0.25$, and $k=2$: the left panel displays the distribution of $T_{N,k}$ for six sample sizes ranging from $N=150$ to $N=2400$ (each with 400 repetitions); these plots visibly narrow as $N$ increases and converge toward the center of the zero line; the middle panel is a QQ plot of the standardized statistic against the standard normal distribution at the largest $N$ value, shown with a 95\% bootstrap interval, and it shows no systematic deviation from the diagonal; and the right panel tracks the Shapiro-Wilk $p$-value for all six sample sizes; this value remains comfortably above 0.05 throughout—the normality of $T_{N,k}$ has not been rejected for any of the sample sizes we examined.

\begin{figure}[H]
\centering
\includegraphics[width=\textwidth]{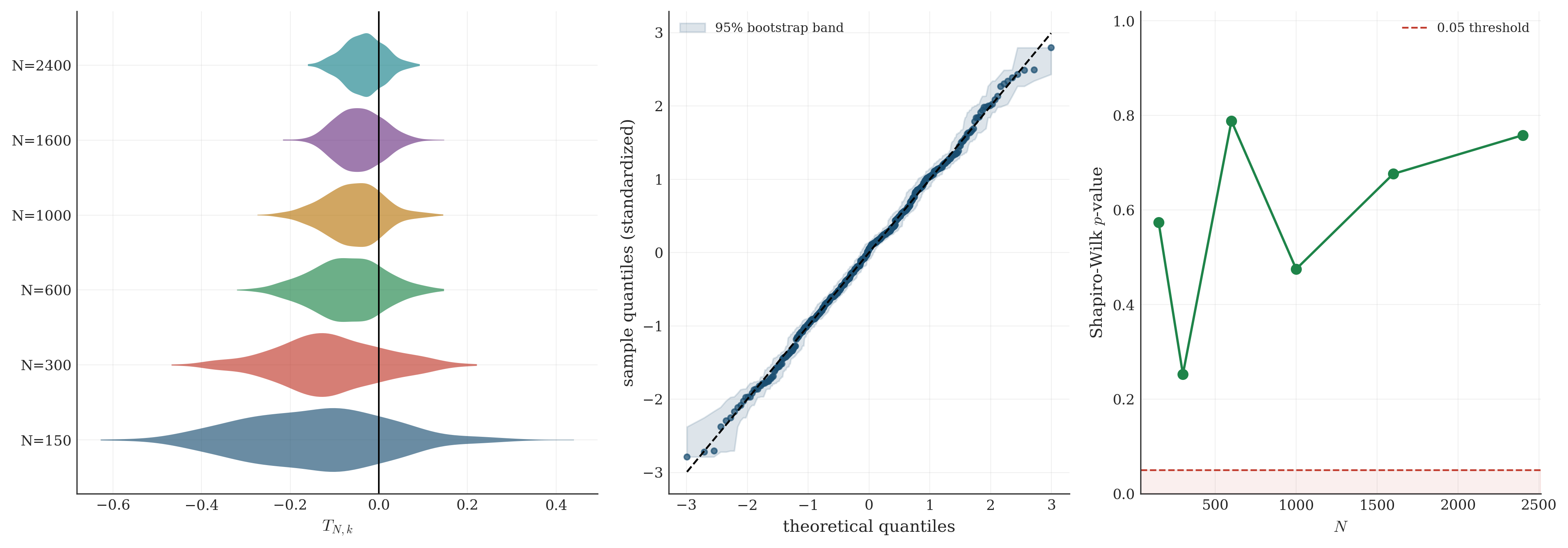}
\caption{Three diagnostic methods regarding the asymptotic normality of $T_{N,k}(2,0,25)$ for $k=2$: (left) The $T_{N,k}$ distribution over the range $N=150,\dots,2400$ is shown as horizontal violin plots (each with 400 repetitions); (center) A QQ plot at $N=2400$ showing the 95\% bootstrap interval relative to $N(0,1)$; (right) the Shapiro-Wilk $p$-value across sample sizes; the 0.05 threshold is shaded.}
\label{fig:normality}
\end{figure}

\subsection{Critical values}
\label{sec:sim-critical}

Table \ref{tab:critical-values} provides the empirical 5\% ciritical value $c_N(0.05)$ for $T_{N,k}(m,\kappa)$. This corresponds to the lower 5\% percentile, consistent with the rejection rule in \eqref{eq:rejection-rule}. The values were obtained from $M=1000$ Monte Carlo simulations. For $\kappa\in\{0.1,0.2,0.3\}$, we list the results for cases where $N$ ranges from 100 to 1000. Here, $m$ takes values from the set $\{2,3\}$, and $k$ is from the set $\{1,2,3\}$. Two main conclusions can be drawn from this. First, when $m$ and $k$ are held constant, the critical value decreases as $N$ increases, and this decrease appears to be on the order of $N^{-1/2}$. This is consistent with what was previously suggested in Section~\ref{sec:sim-consistency}. Second, changing the value of $\kappa$ from 0.1 to 0.3 increases the threshold value, but the magnitude of this increase depends on $m$. When $N=100$, for $k=1$, the increase is approximately doubled when $m=2$. When $m=3$, the increase approaches four times the original value. The reason for this is that the value $\kappa=0.3$ accounts for a larger proportion at the $m=3$ threshold compared to the $m=2$ threshold. For $m=3$, the threshold ratio is $2/5=0.4$, whereas for $m=2$, this ratio is $2/4=0.5$.

\begin{table}[H]
\centering
\caption{Critical values of statistic $T_{N,k}(m,\kappa)$ corresponding to significance level 0.05.}
\label{tab:critical-values}
\begin{tabular}{cccccccc}
\toprule
$\kappa$ & $N$ & \multicolumn{3}{c}{$m=2$} & \multicolumn{3}{c}{$m=3$} \\
 & & $k=1$ & $k=2$ & $k=3$ & $k=1$ & $k=2$ & $k=3$ \\
\midrule
0.1 & 100 & -0.32507 & -0.29025 & -0.28417 & -0.43986 & -0.38162 & -0.37253 \\
 & 200 & -0.24829 & -0.20901 & -0.20595 & -0.31825 & -0.28017 & -0.27527 \\
 & 300 & -0.19255 & -0.16822 & -0.16897 & -0.25294 & -0.24339 & -0.23832 \\
 & 400 & -0.17267 & -0.14033 & -0.14465 & -0.23803 & -0.21382 & -0.21578 \\
 & 500 & -0.16131 & -0.13470 & -0.12815 & -0.20527 & -0.19332 & -0.19235 \\
 & 600 & -0.13368 & -0.11724 & -0.11571 & -0.17299 & -0.17591 & -0.17695 \\
 & 700 & -0.13732 & -0.11703 & -0.11229 & -0.18131 & -0.17070 & -0.16879 \\
 & 800 & -0.11839 & -0.10385 & -0.09941 & -0.16016 & -0.15327 & -0.15412 \\
 & 900 & -0.11581 & -0.09795 & -0.09214 & -0.15561 & -0.14360 & -0.15116 \\
 & 1000 & -0.11029 & -0.09654 & -0.09337 & -0.14535 & -0.13984 & -0.14095 \\
\addlinespace
0.2 & 100 & -0.43606 & -0.39029 & -0.38400 & -0.69080 & -0.66058 & -0.66543 \\
 & 200 & -0.30201 & -0.26918 & -0.27466 & -0.48439 & -0.49963 & -0.51938 \\
 & 300 & -0.22942 & -0.22285 & -0.22538 & -0.44402 & -0.43730 & -0.44894 \\
 & 400 & -0.22036 & -0.20379 & -0.20796 & -0.37510 & -0.39496 & -0.40263 \\
 & 500 & -0.19608 & -0.17471 & -0.18274 & -0.34541 & -0.34900 & -0.36387 \\
 & 600 & -0.16865 & -0.15645 & -0.15931 & -0.30775 & -0.30723 & -0.32858 \\
 & 700 & -0.16162 & -0.14985 & -0.15669 & -0.28305 & -0.29732 & -0.31432 \\
 & 800 & -0.15470 & -0.14364 & -0.14593 & -0.27784 & -0.28175 & -0.29966 \\
 & 900 & -0.14241 & -0.13706 & -0.13259 & -0.25850 & -0.26041 & -0.27449 \\
 & 1000 & -0.13155 & -0.12674 & -0.12894 & -0.24431 & -0.25369 & -0.26777 \\
\addlinespace
0.3 & 100 & -0.67584 & -0.65840 & -0.68309 & -1.76098 & -1.80143 & -1.83409 \\
 & 200 & -0.47427 & -0.47495 & -0.48785 & -1.33379 & -1.43020 & -1.49057 \\
 & 300 & -0.40858 & -0.40549 & -0.41586 & -1.17204 & -1.24295 & -1.31296 \\
 & 400 & -0.36448 & -0.36091 & -0.38181 & -1.07536 & -1.16115 & -1.21465 \\
 & 500 & -0.31646 & -0.31672 & -0.33511 & -0.96859 & -1.06162 & -1.11628 \\
 & 600 & -0.29068 & -0.30903 & -0.31982 & -0.93572 & -1.01000 & -1.07759 \\
 & 700 & -0.27366 & -0.28323 & -0.28992 & -0.90510 & -0.98342 & -1.04411 \\
 & 800 & -0.26985 & -0.26638 & -0.27557 & -0.84191 & -0.94674 & -0.99874 \\
 & 900 & -0.24910 & -0.25049 & -0.26199 & -0.83453 & -0.89693 & -0.94866 \\
 & 1000 & -0.23419 & -0.24365 & -0.25600 & -0.78628 & -0.86819 & -0.92246 \\
\addlinespace
\bottomrule
\end{tabular}
\end{table}

Table~\ref{tab:critical-values} ends at $N=1000$. Therefore, Table~\ref{tab:asymptotic-constants} is used to estimate what will happen for larger values of $N$. Let $\sigma_{m,\kappa,k}$ be the asymptotic standard deviation of $\sqrt{N}\,T_{N,k}(m,\kappa)$. In this study, this value was obtained from $M=1000$ trials conducted for $N=3000$. Additionally, the value $z_{0.05}\approx-1.645$ was used as the 5\% percentile of the standard normal distribution. Then, $\mu_{m,\kappa,k}=\sigma_{m,\kappa,k}\,z_{0.05}$ is determined. Thus, for large values of $N$, the critical value is approximately calculated as $c_N(0.05)\approx \mu_{m,\kappa,k}/\sqrt{N}$,
(see \eqref{eq:rejection-rule}). This means we do not need to repeat the simulation for every value of $N$ we are interested in. Compare this idea with Table~\ref{tab:critical-values}. For example, we consider the values $m=2$, $k=1$, and $\kappa=0.1$. The approximation yields the following result:
$\mu/\sqrt{1000}\approx -2.7633/31.62\approx -0.0874$. The table, however, lists the value $-0.11029$. Thus, while the estimate is not exactly correct when $N=1000$, it does yield a close approximation. This discrepancy is to be expected, since Theorem~\ref{thm:consistency} involves an $O(N^{-1})$ correction. As $N$ increases, the consistency is expected to improve.

\begin{table}[H]
\centering
\caption{Values of $\mu_{m,\kappa,k}$ for $\alpha=0.05$ (empty cells: $\kappa$ exceeds the variance-existence threshold $2/(m+2)$ for that $m$).}
\label{tab:asymptotic-constants}
\begin{tabular}{cccccccccc}
\toprule
$\kappa$ & \multicolumn{3}{c}{$m=1$} & \multicolumn{3}{c}{$m=2$} & \multicolumn{3}{c}{$m=3$} \\
 & $k=1$ & $k=2$ & $k=3$ & $k=1$ & $k=2$ & $k=3$ & $k=1$ & $k=2$ & $k=3$ \\
\midrule
0.05 & -2.5276 & -1.9834 & -1.7300 & -2.9323 & -2.2702 & -2.0532 & -3.3859 & -2.7074 & -2.5572 \\
0.1 & -2.7633 & -2.0950 & -1.7871 & -3.1300 & -2.4534 & -2.2059 & -3.6531 & -2.9601 & -2.6967 \\
0.15 & -2.8170 & -2.0229 & -1.7381 & -3.2049 & -2.6680 & -2.4434 & -4.0497 & -3.3444 & -3.1790 \\
0.2 & -2.7793 & -2.1304 & -1.8713 & -3.5653 & -3.0305 & -2.7998 & -5.1286 & -4.4577 & -4.1955 \\
0.25 & -2.8606 & -2.2116 & -1.9338 & -4.3626 & -3.7113 & -3.4203 & -7.1208 & -6.3112 & -5.9615 \\
0.3 & -3.1265 & -2.4226 & -2.1453 & -5.8019 & -4.8675 & -4.6213 & -13.5903 & -12.0427 & -11.2679 \\
\bottomrule
\end{tabular}
\end{table}

\subsection{GOF signature across the parameter grid}
\label{sec:sim-gof-heatmap}

These critical value tables are generated by selecting a $(m,\kappa,k)$ combination each time. It is also useful to observe how the value of $T_{N,k}(m,\kappa_0)$ changes when you vary two variables simultaneously. On the one hand, there is the $\kappa_0$ estimate; on the other hand, there is the actual $\kappa_1$ value obtained from the data. Figure~\ref{fig:gof-heatmap} illustrates this in a single view. Here, we set $m=2$, $N=2500$, and $k=3$. We then scan both parameters across the same seven-point grid. The grid extends from 12\% to 70\% of the threshold. When $\kappa_0=\kappa_1$, that is, along the diagonal line, the statistic remains close to zero over a large portion of the graph. As we move away from this diagonal, the values drop even further below zero. As we move toward larger $\kappa$ values, this decline becomes more pronounced. This pattern is consistent with what you observed in the power analysis in Section~\ref{sec:sim-power-within}.

\begin{figure}[H]
\centering
\includegraphics[width=0.8\textwidth]{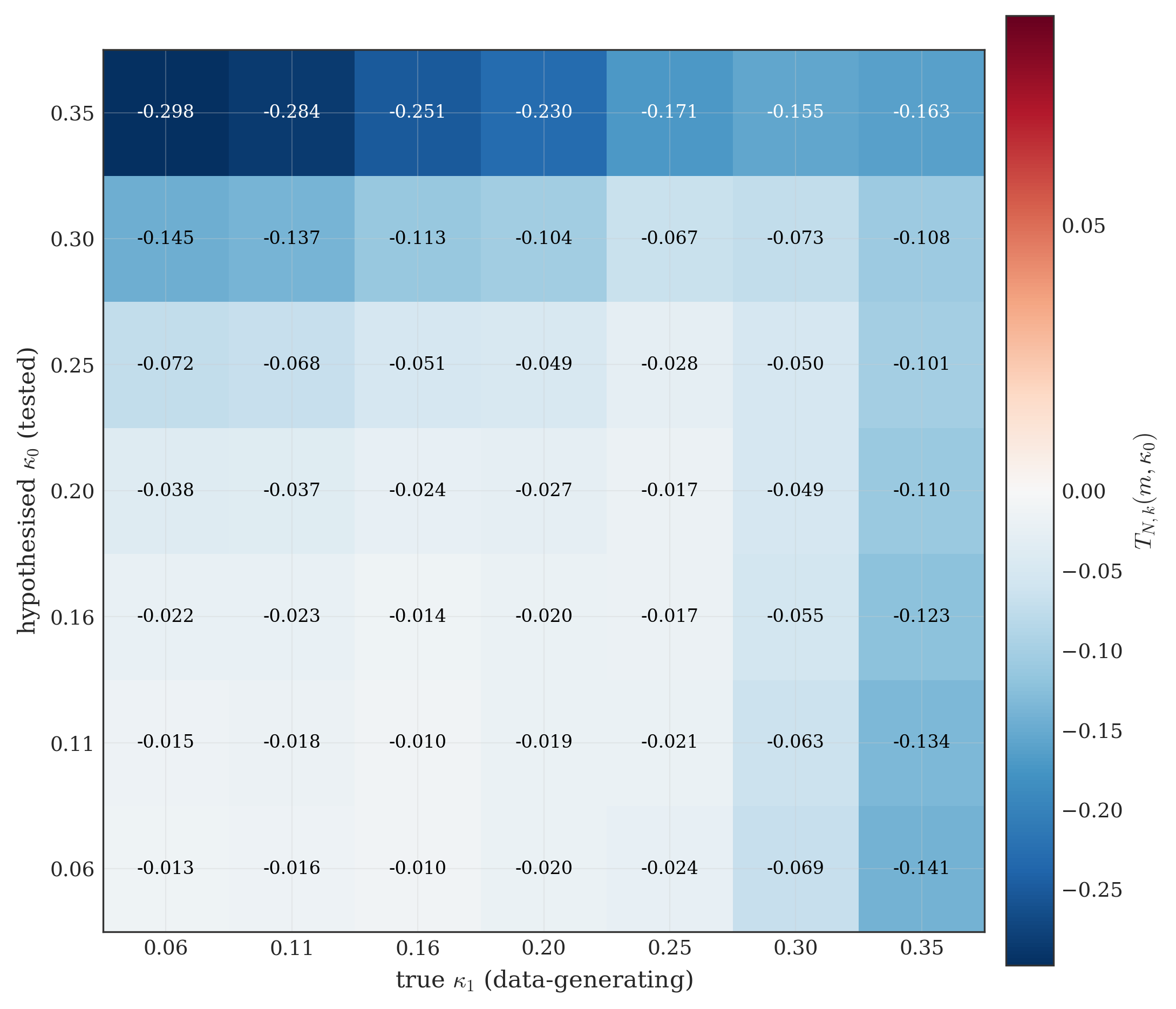}
\caption{$T_{N,k}(m,\kappa_0)$ for $m=2$, $N=2500$, $k=3$, as a function of the hypothesised $\kappa_0$ (rows) and the true data-generating $\kappa_1$ (columns), each cell averaged over 60 replications.}
\label{fig:gof-heatmap}
\end{figure}

One cell in this table deserves closer examination on its own. Figure~\ref{fig:threshold-bias}, isolates the behavior of $T_{N,k}(m,\kappa_0)$ under the \emph{correct} null hypothesis ($\kappa_1=\kappa_0$) as the value of $\kappa_0$ is increased from 50\% to 92\% of the variance-existence threshold, for a fixed $N=2500$. The bias remains moderate up to approximately 75\% of the threshold and then increases sharply, reaching $-1.7$ at 92\% — we attribute this not to the $1/\kappa^2$ term in Theorem~\ref{thm:consistency} alone (see the explanation following that theorem). However, the fact that the density becomes increasingly difficult to estimate locally as $\E[R^2]$ diverges near the limit, rather than at a proven rate. In practice, this means that the nominal critical values in Table~\ref{tab:critical-values} are reliable primarily in cases where $\kappa$ lies comfortably within the acceptable range; furthermore, we do not recommend applying the test, as calibrated here, to a default $\kappa_0$ value exceeding approximately 70–75\% of $2/(m+2)$ without either a significantly larger $N$ value or a resimulated critical value at that specific $\kappa_0$ value.

\begin{figure}[H]
\centering
\includegraphics[width=0.75\textwidth]{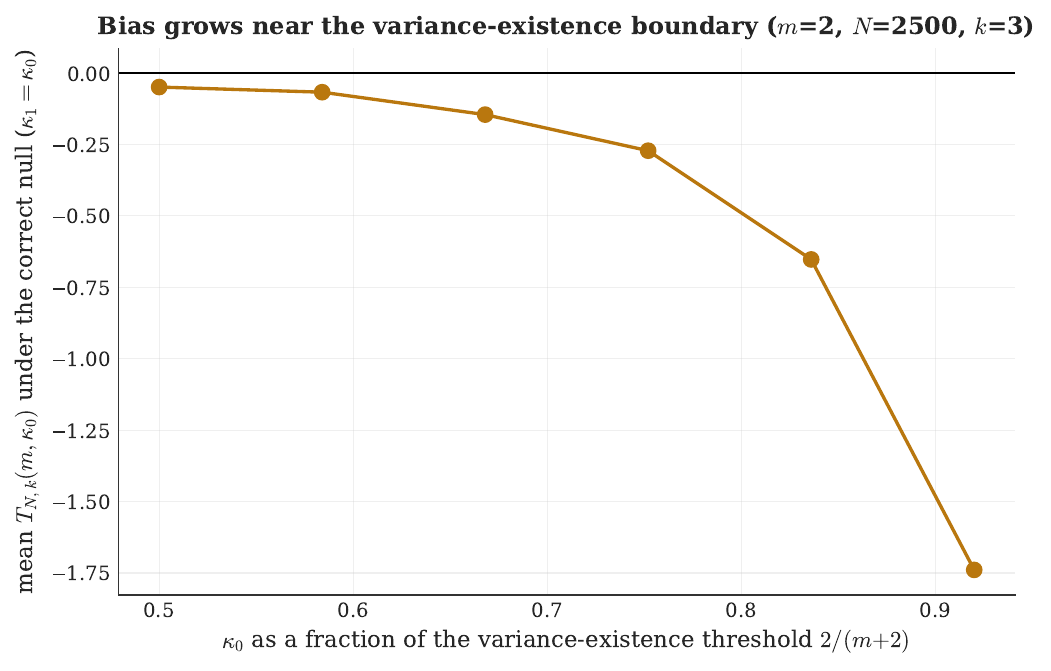}
\caption{Mean of $T_{N,k}(m,\kappa_0)$ under the correct null as $\kappa_0$ approaches the variance-existence threshold $2/(m+2)$, for $m=2$, $N=2500$, $k=3$ (30 replications per point).}
\label{fig:threshold-bias}
\end{figure}

\subsection{Power against within-family alternatives}
\label{sec:sim-power-within}

Before we generalize for the values of $m$, $N$, and $k$, it is useful to examine a single representative example in detail. Let us consider Figure~\ref{fig:power-single}$m=2$, $N=1500$, $k=3$, and $\kappa_0=0.225$ (45\% of the threshold). We estimate the empirical 5\% critical value from $M=500$ repetitions of the null hypothesis and then calculate the rejection rate based on a grid of true $\kappa_1$ values ranging from 5\% to 92\% of the threshold. At $\kappa_1=\kappa_0$, the power is nearly exactly 0.05 due to the nature of the critical value; when $\kappa_1$ exceeds $\kappa_0$, the power increases sharply and reaches 1 long before reaching the top of the grid; in contrast, when $\kappa_1 < \kappa_0$, the power barely rises above the nominal level.

\begin{figure}[H]
\centering
\includegraphics[width=0.75\textwidth]{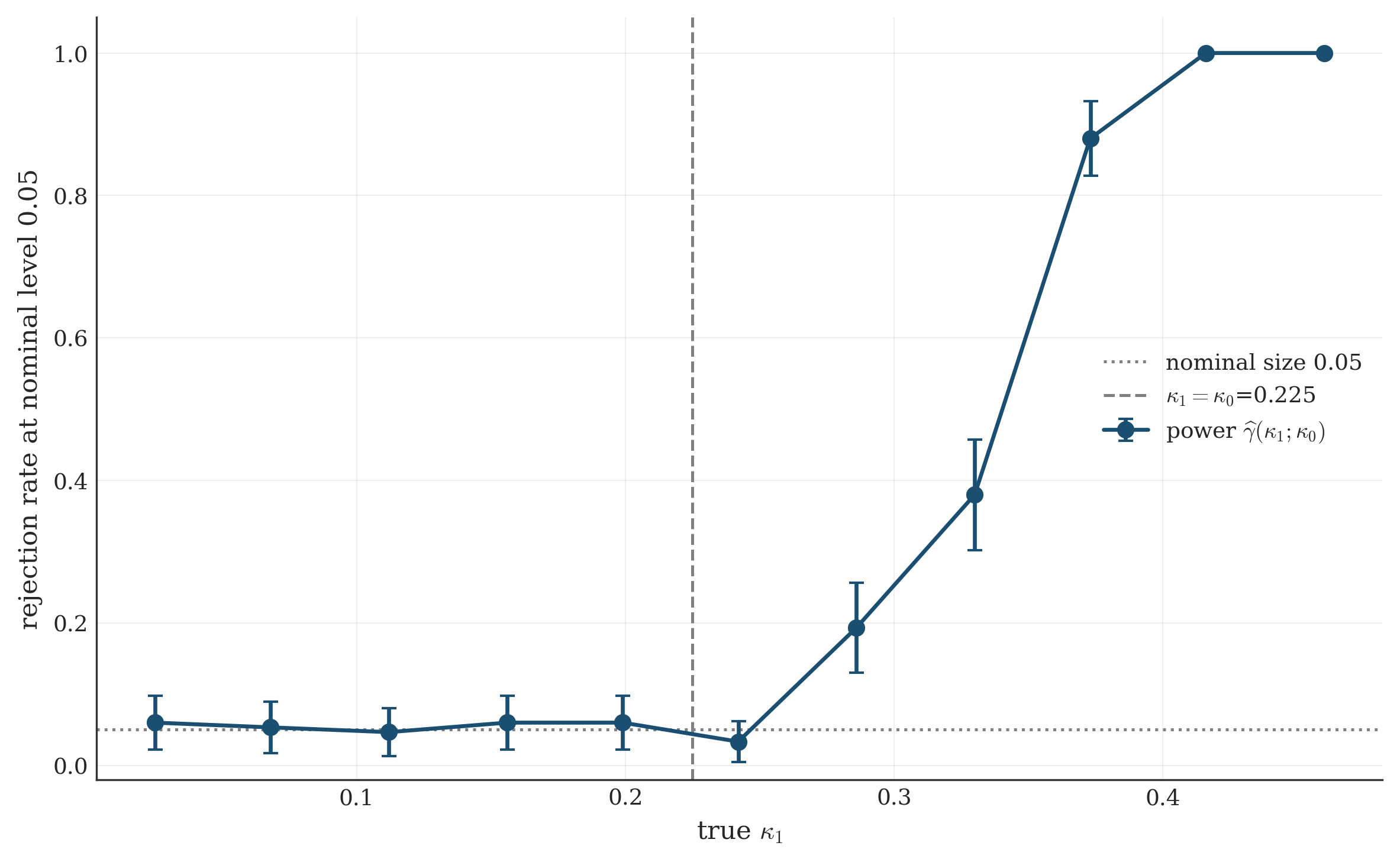}
\caption{Empirical power curve for $H_0:\kappa=\kappa_0=0.225$ against $\kappa GG(2,\kappa_1)$ alternatives, at $N=1500$, $k=3$. The critical value is the empirical 5th percentile of 500 null replications; power at each $\kappa_1$ is based on 150 replications.}
\label{fig:power-single}
\end{figure}

As shown in Figure~\ref{fig:power-within}, the same process is repeated for all combinations of $m$, $N$, and $k$, while keeping $\kappa_0 = 0.20$ constant. The pattern observed in the single case above holds everywhere: As the value of $\kappa_1$ rises above $\kappa_0$, the power increases rapidly and uniformly across all values of $m$, $N$, and $k$; as the value of $\kappa_1$ falls below $\kappa_0$, the power remains close to the nominal 5\% level and shows only a slight increase even at the smallest simulated value of $\kappa_1$. This asymmetry does not stem from a deviation in the critical value estimate. The mean of $T_{N,k}$, for $\kappa_1 < \kappa_0$, as already seen in Figure~\ref{fig:gof-heatmap}, changes only slightly at the same distance $|\kappa_1 - \kappa_0|$, compared to the much larger deviations observed when $\kappa_1 > \kappa_0$   and we interpret this not as a flaw in the estimator but as a genuine property of the $\kappa$-entropy functional: the mapping $\kappa\mapsto S_\kappa[f_\kappa]$ is less sensitive to zero on the $\kappa_0$ side compared to the side approaching the variance-to-variance limit. Therefore, instead of a two-sided test on $|T_n|$, we adopted the one-sided rejection rule given in Section~\ref{sec:test} as \eqref{eq:rejection-rule}: A two-sided test conducted using the same nominal level of significance would expend approximately half the probability of rejection in the direction of the statistic that contains the least information. Depending on this, a one-sided test is quite suitable for detecting alternatives with heavier tails than the hypothesized $\kappa_0$; conversely, it is not very suitable for detecting deviations with lighter tails; if alternatives with lighter tails are of practical interest, a truly two-sided or variance-stabilized statistic would be a natural direction for future work.

\begin{figure}[H]
\centering
\includegraphics[width=\textwidth]{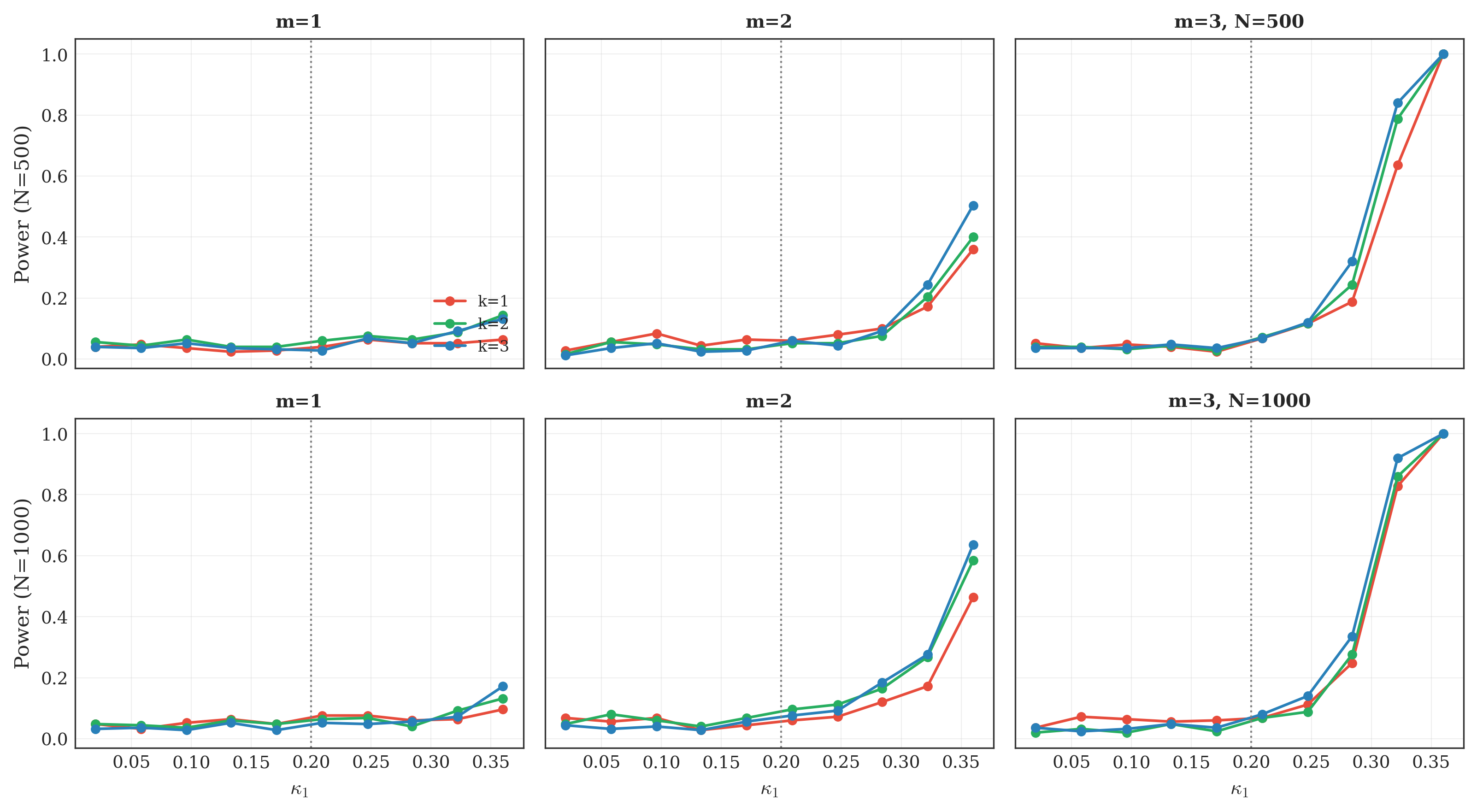}
\caption{Empirical power of the test for $\kappa_0=0.20$ against $\kappa GG(m,\kappa_1)$ alternatives, $M=250$--500 repetitions. Columns: $m=1,2,3$; rows: $N=500,1000$; colours: $k=1,2,3$. The vertical dotted line marks $\kappa_1=\kappa_0$.}
\label{fig:power-within}
\end{figure}

\subsection{Power against Student-\emph{t} alternatives}
\label{sec:sim-power-cross}
In Figure~\ref{fig:power-cross}, we consider the null hypothesis $\kappa_0 = 0.05$, which is close enough to the Gaussian limit for the comparison to be meaningful, and evaluate the power against multivariate Student-$t_\nu$ alternative hypotheses as the value of $\nu$ varies between 3 and 50. Here, we see that as $\nu\to\infty$, the results align with what would be expected from the limit relationship $t_\nu\to N(0,I_m)$: the power is close to one or near one for small $\nu$ values, where the Student-$t$ distributions are most concentrated and can be better distinguished from $\kappa GG(m,0.05)$, and it decreases monotonically as $\nu$ increases; it stabilizes at a level close to the nominal power when $\nu$ exceeds approximately 20. For larger $m$ values, the decrease is slightly faster, and $k=3$ consistently remains at or above the value of $k=1$ throughout the entire curve; this reflects the ranking already visible in Figure~\ref{fig:consistency-heatmap}.

\begin{figure}[H]
\centering
\includegraphics[width=\textwidth]{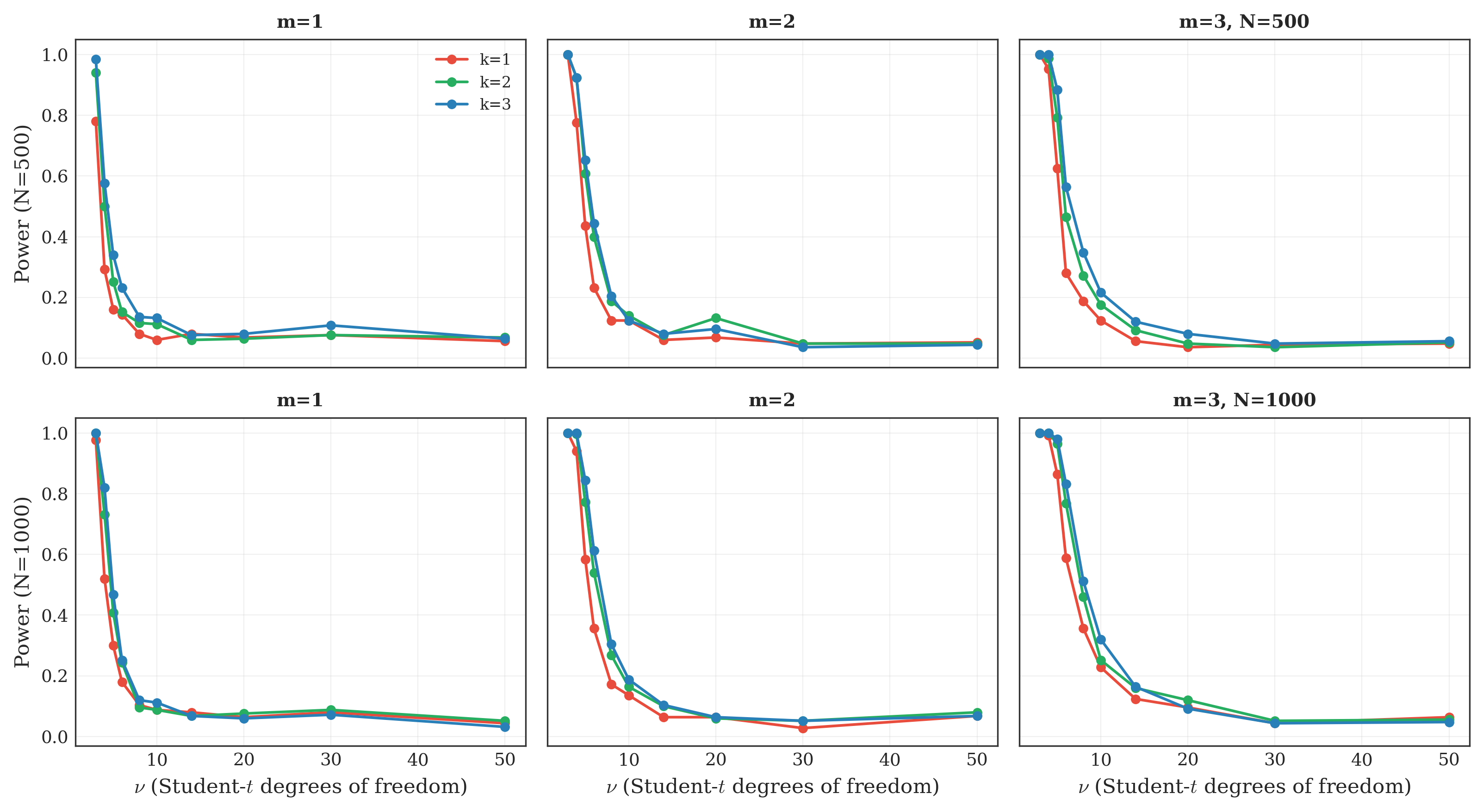}
\caption{Empirical power of the test for $\kappa_0=0.05$ against multivariate Student-$t_\nu$ alternatives, $M=250$--500 repetitions. Columns: $m=1,2,3$; rows: $N=500,1000$; colours: $k=1,2,3$.}
\label{fig:power-cross}
\end{figure}

When evaluated as a whole, these results support the theoretical developments in Sections $\ref{sec:estimator}$–$\ref{sec:clt}$ regarding the simple null hypothesis test in Section $\ref{sec:test}$: the estimator approaches the expected value at a rate of $N^{-1/2}$, the finite-sample distribution approaches the normal distribution long before $N$ reaches the thousands, the critical values behave consistently with the $1/\kappa^2$ dependence implied by the consistency theorem for small $\kappa$ values, and for large $\kappa$ values, they break down near the variance-presence limit by a mechanism that we have empirically documented but not derived; the resulting one-sided test possesses significant power against both within-family and between-family alternative hypotheses in the direction for which it was designed. Taking a broader view, these complete the natural quartet of entropy-based GOF tests—Shannon (Gauss), Rényi (Student/Pearson II), Tsallis ($q$-Gauss), and Kaniadakis ($\kappa$ -Gauss)—all of which are constructed from the same combination of the maximum entropy principle and the $k$-NN estimator, and the $\kappa$-Gauss case behaves consistently with this model despite the estimator’s different algebraic structure.

\section{Conclusion}
\label{sec:conclusion}

This article extends the entropy-based nearest-neighbor approach for goodness-of-fit testing to the Kaniadakis entropy, the sole member of the standard single-parameter entropy deformations that has not yet been included in this framework. Starting from the maximum entropy principle, we derived the multivariate $\kappa$-Gaussian distribution—including a closed-form expression for the scaling parameter $\beta(\kappa,m)$ obtained via a scaling relation for the radial normalization integral—and, through the scaling identity of the $\kappa$ -exponential function, for which there is no clear analog to the Tsallis $q$-exponential function; these elements, when combined, ensure that the resulting test statistic is not only definable but also fully computable. Instead of a single functional sufficient for R’enyi or Tsallis entropy, we showed that the corresponding $k$-nearest-neighbor estimator, constructed as the difference between two power-sum functionals, is $L^2$-consistent at the standard rate of $N^{-1/2}$ and asymptotically normal, and we determined a precise threshold value: We determined that $\kappa < 2/(m+2)$; below this threshold, the $\kappa$-Gaussian distribution has finite variance—a condition not addressed in previous papers in this series, and one that has proven to be significant not only in theory but also in practice.

The simulation study confirms the theory and presents findings that cannot be clearly deduced from asymptotic analysis alone. First, the estimator’s behavior on finite samples breaks down near the limit where variance exists through a mechanism that is different from and greater than the $1/\kappa^2$ term that determines the small $\kappa$ in Theorem~\ref{thm:consistency}. We document this empirically in Section~\ref{sec:sim-gof-heatmap}, but we have not derived a ratio for this and note it as an open technical question. Second, because of the test’s structure, we adopted a one-sided rejection rule (Section~\ref{sec:test}), since the mapping $\kappa\mapsto S_\kappa[f_\kappa]$ is much more sensitive to alternatives with heavier tails than to those with lighter tails; which detects heavier-tailed deviations well but has low power against lighter-tailed deviations; if both directions are of practical interest, a two-sided or variance-stabilized alternative is a natural next step. Third, we have mentioned but not derived the plug-in/sandwich correction required to extend the test from the simple null hypothesis examined here (known $\mu,\Sigma$) to the composite null hypothesis (estimated $\mu,\Sigma$) that most applications will require; which, in our view, is the most urgent piece of unfinished work left by the paper.

This article is intended to serve as a template that goes beyond the specific test developed here: The same combination of maximum entropy characterization, a $k$-NN entropy estimator, and a zero-entropy term in closed form can be assembled for any entropy functional that performs sufficiently well; The $\kappa$-Gaussian case, meanwhile, concretely demonstrates what changes when the functional is not a sum of single powers. The next natural candidate is the \emph{Sharma–Mittal entropy} \citep{sharma1975}; this is a two-parameter family that includes the R\'enyi and Tsallis entropies as special or limiting cases (and Shannon entropy as a further limit beyond that). Deriving the Sharma–Mittal maximum entropy distribution and its NN-based GOF test will encompass \citet {cadirci2025renyi,cadirci2025tsallis} as special cases within a single framework; whether it can similarly encompass the $\kappa$-Gaussian distribution established here, or whether Kaniadakis entropy is truly an orthogonal deformation rather than a special case of something larger, is an open question that must be addressed directly in the upcoming work.

\section*{Code Availability}

The code implementing the $\kappa$-Gaussian distribution, the sampler, the $k$-NN entropy estimator, and the full simulation study (consistency, critical values, asymptotic normality, and power against within-family and Student-$t$ alternatives) is available at:
\begin{center}
\url{https://github.com/mehmetsiddik/kappa-gaussian-gof-test}
\end{center}

\end{document}